\theoremstyle{plain}
\begin{document}


\theoremstyle{plain}
\newtheorem{theorem}{Theorem} [section]
\newtheorem{corollary}[theorem]{Corollary}
\newtheorem{lemma}[theorem]{Lemma}
\newtheorem{proposition}[theorem]{Proposition}


\theoremstyle{definition}
\newtheorem{definition}[theorem]{Definition}
\theoremstyle{remark}
\newtheorem{remark}[theorem]{Remark}

\numberwithin{theorem}{section}
\numberwithin{equation}{section}
\numberwithin{figure}{section}

\title[On the ABP estimate for the infinity laplacian]{On the Aleksandrov-Bakelman-Pucci estimate for the infinity Laplacian}

\thanks{F.C. partially supported by a MEC-Fulbright fellowship and Spanish project  MTM2010-18128, MICINN.
A.D.C. supported by CMUC/FCT and project UTAustin/MAT/0035/2008. 
  D.M. partially supported by NFS grant DMS-0545984.}

\author[F. Charro]{Fernando Charro}

\address{Department of Mathematics, The University of Texas, 1 University Station C1200, Austin, TX 78712}

\email{fcharro@math.utexas.edu and maximo@math.utexas.edu}

\author[G. De Philippis]{Guido De Philippis}
\address{Scuola Normale Superiore,
p.za dei Cavalieri 7, I-56126 Pisa, Italy}
\email{guido.dephilippis@sns.it}

\author[A. Di Castro]{Agnese Di Castro}
\address{CMUC, Department of Mathematics, University of Coimbra 3001-454 Coimbra, Portugal}
\email{dicastro@mat.uc.pt}

\author[D. M\'aximo]{Davi M\'aximo}

\keywords{ Infinity Laplacian, a priori estimates, Maximum Principle.
\\
\indent 2010 {\it Mathematics Subject Classification:}  	35B45, 	35B50, 35J60, 35J70, 35J92}

\date{}

\maketitle

\begin{abstract}
We prove $L^\infty$ bounds  and estimates of the modulus of continuity of solutions to the Poisson problem for the normalized infinity and $p$-Laplacian, namely
\[
-\Delta_p^N u=f\qquad\text{for $n<p\leq\infty$.}
\]
 We are able to provide a stable family of results depending continuously on the parameter $p$. We also prove the failure of the classical Alexandrov-Bakelman-Pucci estimate for the normalized infinity Laplacian and propose alternate estimates.
\end{abstract}

\section{Introduction}\label{section.intro}

In this paper we investigate the validity of Alexan\-drov-Bakelman-Pucci estimates (ABP for short) for the  infinity Laplacian,
\begin{equation}\label{inflap}
\Delta_\infty u:=\langle D^2u\nabla u, \nabla u\rangle 
\end{equation}
and its normalized version
\begin{equation}\label{inflapnorm}
\Delta_\infty^N u:=\Big\langle D^2u \frac{\nabla u}{|\nabla u|}, \frac{\nabla u}{|\nabla u|}\Big\rangle
\end{equation}
 as well as    $L^\infty$  and $\mathcal{C}^{0,\alpha}$  estimates, uniform in $p$,   for the solutions of the Poisson problem for the normalized $p$-Laplacian
 \begin{equation}\label{p.finite.xxx}
\Delta_{p}^N u:= \frac1p\,|\nabla u|^{2-p}\,\text{div}\big(|\nabla u|^{p-2}\nabla u\big).
 \end{equation}
 
 These type of normalized operators have recently received great attention, mainly since they have an interpretation in terms of random Tug-of-War games, see \cite{PSSW, PSSW2}.

As an application of these estimates, in Section \ref{section.limits} we  prove convergence as $p\to\infty$ of solutions to 
\begin{equation}\label{nespresso}
-\Delta_p^N u=f,
\end{equation}
to solutions of
\begin{equation}\label{coffa}
-\Delta_\infty^N u=f.
\end{equation}
It is interesting to compare this result with the results in \cite{Bha-DiBe-Man}. An important feature of this limit process is the lack of variational structure of problem \eqref{nespresso} that yields complications in the proof of the uniform convergence of the solutions (for instance, Morrey's estimates, are not available). It is in this context that our ABP estimate provides a stable $L^\infty$ bound that can be used in combination with our stable regularity results to prove convergence.

\medskip

The Alexandrov-Bakelman-Pucci maximum principle is well-known since the decade of 1960 in the context of linear uniformly  elliptic  equations. It can be stated as follows,
\begin{equation}\label{ABP.justdoit}
\sup_{\Omega} u\leq  \sup_{\partial\Omega} u + C(n,\lambda,\Lambda)\,{\rm diam}\,(\Omega)\|f\|_{L^n(\Omega)},
\end{equation}
for $f$ the right-hand side of the equation and $0<\lambda\leq\Lambda$ the ellipticity constants.

  These estimates have been generalized in several directions, including uniformly elliptic fully nonlinear equations where they are the central tool in the proof of the Krylov-Safonov Harnack inequality and regularity theory (see \cite{CC} and the references therein).

In Theorem \ref{ABP.plapnorm.norm.n} we recall the classical ABP estimate for solutions of \eqref{nespresso} (see  \cite{ACP,CC, DFQ,Imbert} for different proofs of this result) in order to stress that the constant blows up as $p\to\infty$. This fact is not merely technical, actually we are able to prove that the classical ABP estimate fails to hold for the normalized infinity Laplacian (see Section \ref{section.examples}). An open question is if an estimate of the form \eqref{ABP.justdoit} holds for some other integral norm $\|f\|_{L^p(\Omega)}$ with $p>n$.

The failure of the classical ABP estimate reflects the very high degeneracy of the equation as the normalized infinity Laplacian only controls  the second derivative in the direction of the gradient.  Remarkably, this is still enough to get an estimate of $\sup u$ of the type,
\[
\big(\sup_{\Omega }u-\sup_{\partial\Omega}u\big)^2\leq C\, {\rm diam}
(\Omega)^2 \int _{\sup_{\partial\Omega}u}^{\sup_{\Omega}u}
\|f\|_{L^\infty(\{u=r\})}\,dr,
\]
since the second derivatives in the directions tangential to the level sets can be controlled in average using the Gauss-Bonnet theorem, see Section \ref{section.ABP.inflap} for a more precise statement. See also Section \ref{section.examples} for an example in which this estimate is sharper than the estimate with a plain $\|f\|_{L^\infty(\Omega)}$ in the right-hand side.

With these ideas we are able to provide a stable family of estimates depending continuously on the parameter $p\in(1,\infty]$. Incidentally, our ABP-type estimates turn out to be independent of the dimension $n$.  Maybe this could have some interest in the context of Tug-of-War games in infinity dimensional spaces.

Let us now briefly discuss the $\mathcal{C}^{0,\alpha}$ estimates. The well-known Krylov-Safonov estimates in \cite{CC} apply to the normalized $p$-Laplacian whenever $p<\infty$, but degenerate as $p\to\infty$ as they depend upon the ratio between the ellipticity constants (see also \cite[Section 9.7 and 9.8]{GT}), which in this case is $p-1$  and  blows-up as $p\to\infty$.

We prove H\"older estimates for solutions of the normalized $p$-Laplacian as well as Lipschitz estimates for the normalized $\infty$-Laplacian. The main interest of this estimates is that they are stable in $p$, so that the whole range $n<p\leq\infty$ can be treated in a coherent way  with all the parameters involved varying continuously.

The ideas behind our proof are somehow reminiscent of the comparison with cones property in \cite{Crandall.Evans.Gariepy} for the homogeneous infinity Laplace equation and the comparison with polar quadratic polynomials introduced in \cite{armstrong.smart, LuWangCPDE} to study the non-homogeneous normalized $\infty$-Laplace equation.
Actually, our Lipschitz estimates in the case of the infinity Laplacian are closely related to \cite[Lemma 5.6]{armstrong.smart}.

Although we shall not address this issue here, let us mention that existence of solutions to  \eqref{nespresso} and \eqref{coffa} is a nontrivial question as the normalized infinity and $p$-Laplacian are non-variational (in contrast to the regular $p$-Laplacian) and discontinuous operators (see Section \ref{section.prelim}). 

Existence of viscosity solutions to \eqref{coffa} with Dirichlet boundary conditions has been proved using game-theoretic arguments, finite difference methods, and PDE techniques, see \cite{armstrong.smart, LuWangCPDE, PSSW}. Moreover, solutions to the Dirichlet problem can be characterized using comparison with quadratic functions.  

Uniqueness of  solutions to the Dirichlet problem is known if the right-hand side $f\in\mathcal{C}(\Omega)\cap L^\infty(\Omega)$ is either 0, or has constant sign (see \cite{armstrong.smart, Jensen93,LuWangCPDE}). When this conditions are not met, interesting non-uniqueness phenomena can happen, see \cite{armstrong.smart} for a detailed discussion. 

About \eqref{nespresso}, we first notice that when the right-hand side is 0, both the normalized and regular $p$-Laplacian coincide. For a non-trivial right-hand side, existence of viscosity solutions can be addressed with a game-theoretic approach (see \cite{PSSW2}). However, to the best of our knowledge, conditions that guarantee uniqueness are not  well understood.

The paper is organized as follows. In Section \ref{section.prelim} we introduce some notation and preliminaries. In Sections \ref{section.ABP.inflap}  and \ref{section.ABP.plap} we prove  the ABP-type estimates. Then, in Section \ref{section.modulus.cont} we provide stable estimates of the modulus of continuity of the solutions in the case $n<p\leq\infty$. Next, in Section \ref{section.limits} we justify the limits as $p\to\infty$ and 
finally, in Section \ref{section.examples} we provide examples showing the failure of an estimate of the form \eqref{ABP.justdoit} and analyze the sharpness of our ABP-type estimate.

\section{Notation and preliminaries}\label{section.prelim}
In this section we are going to state notations and recall some facts about the operators we are going to use in the sequel.

In what follows, $\Omega\subset\mathbb{R}^n$, $n\geq2$, will be an open and bounded set. $B_r(x)$ is the open ball of radius $r$ and center $x$. We shall denote the diameter of a domain by $d$ or ${\rm diam} (\Omega)$. We write $1_A(x)$ for the indicator function of a set $A$.
Given two vectors $\xi,\eta\in\mathbb{R}^n$ we denote their scalar product as $\langle\xi,\eta\rangle=\sum_{i=1}^n\xi_i\eta_i$ and their tensor product as the matrix $\xi\otimes\eta=(\xi_i\eta_j)_{1\leq i,j\leq n}$. $S^n$ is the space of symmetric $n\times n$ matrices.

$\mathcal{H}^{n-1}$ stands for the $n-1$ dimensional Haussdorff measure, which for $\mathcal{C}^1$ manifolds coincide with the classical surface measure. Wherever not specified otherwise, ``a.e." will refer to the Lebesgue measure.

Given a function $u$, we shall denote $u^+=\max\{0,u\}$ and $u^-=\max\{0,-u\}$ so that $u=u^+-u^-$. We shall say that a function is twice differentiable at a point $x_0\in\Omega$ if 
\[
u(x)=u(x_0)+\langle\xi,(x-x_0)\rangle+\frac12\langle A(x-x_0),(x-x_0)\rangle +o(|x-x_0|^2)
\]
as $|x- x_0|\to0$ for some $\xi\in\mathbb{R}^n$ and  $A\in S^n$, which are easily seen to be unique and will be denoted by $\nabla u(x_0)$ and $D^2 u(x_0)$ respectively.

A function $u$ is semiconvex  if $u(x)+C|x|^2$ is convex for some constant $C\in\mathbb{R}$. Geometrically, this means that the graph of $u$ can be touched from below by a paraboloid of the type $a+\langle b,x\rangle-C|x|^2$ at every point.

\medskip

The infinity Laplacian and $p$-Laplacian for $1<p<\infty$, as well as its normalized versions have already been defined in the introduction, see \eqref{inflap}, \eqref{inflapnorm}, and \eqref{p.finite.xxx}. For the record, let us recall here different expressions for the variational $p$-Laplacian, $1<p<\infty$,
\begin{equation}\label{plap}
\begin{split}
\Delta_{p}u&:=
\text{div}\big(|\nabla u|^{p-2}\nabla u\big)\\
&=|\nabla u|^{p-2}\cdot
\textnormal{trace}\left[\left(I+(p-2)\frac{\nabla  u\otimes\nabla  u}{|\nabla  u|^2}\right)D^2 u\right]
\end{split}
\end{equation}
and its normalized version
\begin{equation}\label{plapnorm}
\begin{split}
\Delta_p^N u&=
\frac{1}{p}\,\textnormal{trace}\left[\left(I+(p-2)\frac{\nabla  u\otimes\nabla  u}{|\nabla  u|^2}\right)D^2 u\right]\\
&=\frac{1}{p}\,\Delta u+\frac{p-2}{p}\,\Delta_{\infty}^{N}u\\
&=\frac{1}{p}\,|\nabla u|\,\text{div}\Big(\frac{\nabla u}{|\nabla u|}\Big)+\frac{p-1}{p}\,\Delta_{\infty}^{N}u\\
&=\frac{1}{p}\,\Delta_{1}^{N}u+\frac{p-1}{p}\,\Delta_{\infty}^{N}u.
\end{split}
\end{equation}
It is worth emphasizing the lack of variational structure of \eqref{plapnorm}, a uniformly elliptic operator  in trace form, in contrast to the regular $p$-Laplacian \eqref{plap}, a quasilinear operator in divergence form.

Both normalized operators \eqref{inflapnorm} and \eqref{plapnorm} (except for $p=2$) are undefined when $\nabla u=0$, where they have a bounded discontinuity, even if $u$ is regular. This can be remediated adapting the notion of viscosity solution (see \cite[Section 9]{CIL} and also  \cite[Chapter 2]{Giga}) using the upper and lower semicontinuous envelopes (relaxations) of the operator.

Let us recall here the definition of viscosity solution to be used in the sequel. To this end, 
given a function $h$ defined in a set $L$, we need to introduce its \emph{upper semicontinuous envelope} $h^*$ and \emph{lower semicontinuous envelope} $h_*$ defined by
\[
\begin{split}
h^*(x)=\lim_{r\downarrow0}\sup\{h(y):\ y\in\overline{B}_r(x)\cap L\}\\
h_*(x)=\lim_{r\downarrow0}\inf\{h(y):\ y\in\overline{B}_r(x)\cap L\}
\end{split}
\] 
as functions defined in $\overline{L}$.

\begin{definition}\label{def.visc.elliptic}
Let  $\mathcal{F}(\xi,X)$ be defined in a dense subset of $\mathbb{R}^n\times S^n$ with values in $\mathbb{R}$ and assume $\mathcal{F}_*<\infty$ and $\mathcal{F}^*>-\infty$ in $\mathbb{R}^n\times S^n$.

\noindent1.\quad{}An upper semicontinuous function $u:\Omega\to\mathbb{R}$ is a \emph{viscosity subsolution} of
\begin{equation}\label{E:P}
\mathcal{F}\big(\nabla u,D^{2}u\big)= f(x)
\end{equation}
in $\Omega$ if for all ${x_{0}}\in\Omega$ and $\varphi\in \mathcal{C}^{2}(\Omega)$
such that $u-\varphi$ attains a local maximum at  ${x_{0}}$,  one has
\[
\mathcal{F}_*\big(\nabla \varphi({x_{0}}),D^{2}\varphi({x_{0}})\big)\leq f({x_{0}}).
\]

\noindent2.\quad{}A lower semicontinuous function $u:\Omega\to\mathbb{R}$ is a \emph{viscosity supersolution} of \eqref{E:P} in $\Omega$ if for all ${x_{0}}\in\Omega$ and $\varphi\in \mathcal{C}^{2}(\Omega)$ such that $u-\varphi$ attains a local minimum at  ${x_{0}}$,  one has
\[
\mathcal{F}^*\big(\nabla \varphi({x_{0}}),D^{2}\varphi({x_{0}})\big)\geq f({x_{0}}).
\]

\noindent3.\quad{}We say that $u$ is a \emph{viscosity solution} of
\eqref{E:P} in $\Omega$ if it is both a viscosity subsolution and supersolution.
\end{definition}

 Given a symmetric matrix $X\in S^n$, we shall denote by
$M(X)$ and $m(X)$ its greatest and smallest eigenvalues respectively, that is,
\[
M(X)=\max_{|\xi|=1}\langle X\xi,\xi\rangle,\qquad m(X)=\min_{|\xi|=1}\langle X\xi,\xi\rangle.
\]

In the case of operators \eqref{inflapnorm} and \eqref{plapnorm}, which are continuous in $\mathbb{R}^n\setminus\{0\}\times S^n$, Definition \ref{def.visc.elliptic} can be particularized as follows.

\begin{definition}\label{def.visc.plapnorm}
Let $\Omega$ be a bounded domain and $1<p<\infty$. An upper semicontinuous function $u:\Omega\to\mathbb{R}$ is a \emph{viscosity subsolution} of
\begin{equation}\label{eq.def.plapnorm}
-\Delta_{p}^N u(x)=f(x)\qquad\text{in}\ \Omega,
\end{equation}
if for all ${x_{0}}\in\Omega$ and $\varphi\in \mathcal{C}^{2}(\Omega)$ such that $u-\varphi$ attains a local maximum at  ${x_{0}}$,  one has
\[
\left\{
\begin{split}
-&\Delta_p^{N}\varphi({x_{0}})\leq f({x_{0}})\hspace{113pt}\text{if}\ \nabla\varphi({x_{0}})\neq0,\\
-&\frac{1}{p}\Delta\varphi({x_{0}})-\frac{p-2}{p}\,M(D^2\varphi({x_{0}}))\leq f({x_{0}})\quad\text{if}\ \nabla\varphi(x_0)=0\ \text{and}\ p\in[2,\infty),\\
-&\frac{1}{p}\Delta\varphi({x_{0}})-\frac{p-2}{p}\,m(D^2\varphi({x_{0}}))\leq f({x_{0}})\hspace{13pt}\text{if}\ \nabla\varphi(x_0)=0\ \text{and}\ p\in(1,2],
\end{split}
\right.
\]
with $\Delta_{p}^Nu$ given by \eqref{plapnorm}.
A lower semicontinuous function $u:\Omega\to\mathbb{R}$ is a \emph{viscosity supersolution} of \eqref{eq.def.plapnorm} in $\Omega$  if for all ${x_{0}}\in\Omega$ and $\varphi\in \mathcal{C}^{2}(\Omega)$ such that $u-\varphi$ attains a local minimum at  ${x_{0}}$,  one has
\[
\left\{
\begin{split}
-&\Delta_p^{N}\varphi({x_{0}})\geq f({x_{0}})\hspace{113pt}\text{if}\ \nabla\varphi({x_{0}})\neq0,\\
-&\frac{1}{p}\Delta\varphi({x_{0}})-\frac{p-2}{p}\,m(D^2\varphi({x_{0}}))\geq f({x_{0}})\hspace{13pt}\text{if}\ \nabla\varphi(x_0)=0\ \text{and}\ p\in[2,\infty),\\
-&\frac{1}{p}\Delta\varphi({x_{0}})-\frac{p-2}{p}\,M(D^2\varphi({x_{0}}))\geq f({x_{0}})\hspace{11pt}\text{if}\ \nabla\varphi(x_0)=0\ \text{and}\ p\in(1,2].
\end{split}
\right.
\]

We say that $u$ is a \emph{viscosity solution} of
\eqref{eq.def.plapnorm} in $\Omega$ if it is both a viscosity subsolution and supersolution.
\end{definition}

\begin{remark}
Geometrically, the condition ``$u-\varphi$ attains a local maximum (minimum) at  ${x_{0}}$" means that up to a vertical translation, the graph of $\varphi$ touches the graph of $u$ from above (below).
\end{remark}

Notice that although this definition is slightly different to the one in \cite{PSSW2}, it turns to be equivalent, see \cite{Kawohl.Manfredi.Parviainen}.

 It is worth comparing this definition with the situation in the variational case in the range $1<p<2$. In that case the singularity in \eqref{plap} when written in trace form is not bounded and it is not possible to use the theory mentioned above. Instead, one can adopt the definition proposed in a series of papers by Birindelli and Demengel, see \cite{BD3} and the references therein. An alternative but equivalent definition in the case $f=0$ can be found in \cite[Section 2.1.3]{Giga}.

Analogously, we have the definition of solution for the normalized $\infty$-Laplacian \eqref{inflapnorm}.

\begin{definition}\label{def.visc.inflapnorm}
Let $\Omega$ be a bounded domain. An upper semicontinuous function $u:\Omega\to\mathbb{R}$ is a \emph{viscosity subsolution} of
\begin{equation}\label{eq.def.inflapnorm}
-\Delta_{\infty}^N u(x)=f(x)\qquad\text{in}\ \Omega,
\end{equation}
if for all ${x_{0}}\in\Omega$ and $\varphi\in \mathcal{C}^{2}(\Omega)$ such that $u-\varphi$ attains a local maximum at  ${x_{0}}$,  one has
\begin{equation}\label{def.subsol.inflap}
\left\{
\begin{split}
-&\Delta_\infty^{N}\varphi({x_{0}})\leq f({x_{0}}),\hspace{24pt}\text{whenever}\ \nabla\varphi({x_{0}})\neq0,\\
-&M(D^2\varphi({x_{0}}))\leq f({x_{0}}),\hspace{9pt}\text{otherwise}.
\end{split}
\right.
\end{equation}
A lower semicontinuous function $u:\Omega\to\mathbb{R}$ is a \emph{viscosity supersolution} of \eqref{eq.def.inflapnorm} in $\Omega$  if for all ${x_{0}}\in\Omega$ and $\varphi\in \mathcal{C}^{2}(\Omega)$ such that $u-\varphi$ attains a local minimum at  ${x_{0}}$,  one has
\begin{equation}\label{def.supersol.inflap}
\left\{
\begin{split}
-&\Delta_\infty^{N}\varphi({x_{0}})\geq f({x_{0}}),\hspace{24pt}\text{whenever}\ \nabla\varphi({x_{0}})\neq0,\\
-&m(D^2\varphi({x_{0}}))\geq f({x_{0}}),\quad\text{otherwise}.
\end{split}
\right.
\end{equation}
We say that $u$ is a \emph{viscosity solution} of
\eqref{eq.def.inflapnorm} in $\Omega$ if it is both a viscosity subsolution and supersolution.
\end{definition}

The above definition agrees with the one in \cite{LuWangCPDE, LuWangEJDEQ} and is slightly different from the one in \cite{PSSW}. Anyway, it is easy to see that all this definitions are equivalent.

\section{The ABP estimate for the infinity Laplacian}\label{section.ABP.inflap}

There are two parts in the proof of the classical ABP estimate. The first one is entirely geometric and allows to bound $\sup_\Omega u$ in terms of the integral of $\det D^2u$ as follows,
\begin{equation}\label{parte1}
\left(\frac{\sup_{\Omega} u- \sup_{\partial\Omega} u}{{\rm diam}(\Omega)}\right)^n \leq C(n) \int_{\{u=\Gamma(u)\}}|\det D^2u(x)|\,dx
\end{equation}
where $\Gamma(u)$ is the concave envelope of $u$ (see Definition \ref{contact.elliptic} below). The argument leading to \eqref{parte1} is entirely geometric and does not involve any equation for $u$.

In the second part  we  take advantage of the fact that we are working in the  contact set (so that the Hessian has a sign) and use  the inequality between the arithmetic and geometric means to bound the  determinant in terms of  the trace. In the particular case of the Laplacian,  the following estimate is obtained,
\begin{equation}\label{parte2}
\int_{\{u=\Gamma(u)\}}|\det D^2u(x)|\,dx\leq n^{-n}\int_{\{u=\Gamma(u)\}}\left(-{\rm trace}\, D^2u(x)\right)^n\,dx.
\end{equation}
Estimates \eqref{parte1} and \eqref{parte2} altogether yield the ABP estimate
\[
\left(\frac{\sup_{\Omega} u- \sup_{\partial\Omega} u}{{\rm diam}(\Omega)}\right)^n \leq C(n)\int_{\{u=\Gamma(u)\}}\left(-\Delta u(x)\right)^n\,dx.
\]
At this stage one can use the equation for $u$, namely $-\Delta u\leq f$ to control the right-hand side of the  estimate.

It is  apparent from the above discussion that if we are able to estimate $|\det D^2u|$ in terms of  the infinity Laplacian we shall get an ABP-type estimate. However, a different kind of estimate is expected to hold since, as we have mentioned in the introduction, the classical ABP estimate is false for the infinity Laplacian (see Section \ref{section.examples}).

\medskip

We have to recall some necessary notions (see \cite{CC}).

\begin{definition}\label{contact.elliptic}

Let $u$ be a continuous function in an open convex set $A$. The concave envelope of $u$ in $A$ is defined by
\[
\begin{split}
\Gamma(u)(x)&=\inf_{w}\big\{w(x):\ w\geq u \ \text{in}\ A,\ w\ \text{concave in}\ A\big\}\\
&=\inf_{L}\big\{L(x): \ L\geq u\ \text{in} \ A,\ L\ \text{is affine}\big\}
\end{split}
\]
for $x\in A$.
The upper contact set of $u$ is defined as,
\[
C^+(u)= \{x\in A :\
u(x) = \Gamma(u)(x)\}.
\]
\end{definition}




The following is one of the  main results in this section.
%
%
%

\begin{theorem}\label{ABP.inflapnorm.rigorous}
Let $f\in \mathcal{C}(\Omega)$ and consider 
 $u\in \mathcal{C}(\overline \Omega)$  
that satisfies
\begin{equation}\label{eq.th.ABP.sub}
-\Delta_\infty^N u \leq f(x)\quad\text{in}\ \Omega
\end{equation}
in the viscosity sense. Then, we have
\begin{equation}\label{ABP.est.inflapnorm.sub}
\big(\sup_{\Omega }u-\sup_{\partial\Omega}u^{+}\big)^2\leq 2\, d^2 \int _{\sup_{\partial\Omega}u^+}^{\sup_{\Omega}u}
\|f^+\cdot1_{C^+(u)}\|_{L^\infty(\{u^{+}=r\})}\,dr,
\end{equation}
where  $d={\rm diam}\left( \Omega \right)$. Analogously, whenever  
\begin{equation}\label{eq.th.ABP.super}
-\Delta_\infty^N u \geq f(x)\quad\text{in}\ \Omega
\end{equation}
in the viscosity sense, the following estimate holds, 
\begin{equation}\label{ABP.est.inflapnorm.sup}
\big(\sup_{\Omega }u^{-}-\sup_{\partial\Omega}u^{-}\big)^2\leq 2\, d^2 \int _{\sup_{\partial\Omega }u^{-}}^{\sup_{\Omega }u^{-}}
\|f^-\cdot1_{C^+(-u)}\|_{L^\infty(\{u^{-}=r\})}\,dr.
\end{equation}
\end{theorem}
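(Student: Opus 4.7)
The plan is to adapt the first (geometric) half of the classical Alexandrov--Bakelman--Pucci argument, replacing the Hessian--determinant estimate by a one--dimensional ODE along each ``gradient ray.'' The supersolution bound \eqref{ABP.est.inflapnorm.sup} will follow by applying the subsolution case to $-u$, since $\Delta_\infty^N(-u)=-\Delta_\infty^N(u)$ implies that $-u$ is a viscosity subsolution of $-\Delta_\infty^N(-u)\leq f^-$. After subtracting $\sup_{\partial\Omega}u^+$, I may assume $\sup_{\partial\Omega}u^+=0$ and set $M:=\sup_\Omega u$, assuming $M>0$ (the inequality is otherwise trivial). Fix an interior maximizer $x_0$ with $u(x_0)=M$, and let $\Gamma=\Gamma(u)$ be the concave envelope of $u$. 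By the standard Alexandrov covering lemma, for every $y\in B_{M/d}(0)$ there is a contact point $x\in C^+(u)\cap\Omega$ with $y\in\partial\Gamma(x)$; the supporting--plane inequality $\Gamma(x)\geq M+\langle y,x-x_0\rangle\geq M-|y|d$ further shows that $u(x)=\Gamma(x)>0$. Writing $y=\rho\omega$ in polar coordinates then gives, for each $\omega\in S^{n-1}$, a family of contact points $x(\rho,\omega)\in C^+(u)$, $\rho\in[0,M/d]$, with $x(0,\omega)=x_0$ and $r(\rho,\omega):=\Gamma(x(\rho,\omega))\in(0,M]$.

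\medskip

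\noindent\textbf{The one--dimensional estimate.} Assume provisionally that $\Gamma$ is $\mathcal{C}^2$ and strictly concave. Differentiating $\nabla\Gamma(x(\rho,\omega))=\rho\omega$ in $\rho$ gives $\partial_\rho x=(D^2\Gamma)^{-1}\omega$, so
\[
\frac{\partial r}{\partial\rho}\bigg|_\omega=\nabla\Gamma\cdot\partial_\rho x=\rho\,\omega^\top (D^2\Gamma)^{-1}\omega.
\]
With $A:=-D^2\Gamma\geq 0$, the Cauchy--Schwarz identity $1=(A^{-1/2}\omega)\cdot(A^{1/2}\omega)$ yields $\omega^\top A^{-1}\omega\geq 1/(\omega^\top A\omega)$, i.e.
\[
-\omega^\top (D^2\Gamma)^{-1}\omega\;\geq\;\frac{1}{-\langle D^2\Gamma\,\omega,\omega\rangle}.
\]
At the contact point, $\Gamma$ (after perturbing by $\varepsilon|x-x_0|^4$ to obtain an admissible $\mathcal{C}^2$ upper test function in the sense of Caffarelli--Cabr\'e) touches $u$ from above with nonzero gradient $\rho\omega$, so Definition~\ref{def.visc.inflapnorm} forces $-\langle D^2\Gamma\,\omega,\omega\rangle\leq f^+(x(\rho,\omega))$. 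Combining the two bounds yields the master inequality
\[
-\rho\,\frac{d\rho}{dr}\;\leq\;f^+\bigl(x(r,\omega)\bigr)
\]
along the curve parametrized by $r$. Since $\rho(M,\omega)=0$ and $\rho$ grows as $r$ decreases, integrating between the level $r_*(\omega)$ where $\rho=M/d$ and $r=M$ produces
\[
\frac{1}{2}\left(\frac{M}{d}\right)^{2}\leq\int_{r_*(\omega)}^M f^+\bigl(x(r,\omega)\bigr)\,dr\leq\int_0^M\bigl\|f^+\cdot 1_{C^+(u)}\bigr\|_{L^\infty(\{u=r\})}\,dr,
\]
where the last inequality uses $x(r,\omega)\in C^+(u)\cap\{u=r\}$. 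Multiplying by $2d^2$ and undoing the normalization yields \eqref{ABP.est.inflapnorm.sub}; note that the estimate is completely dimension--free, as promised in the introduction.

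\medskip

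\noindent\textbf{Main obstacle.} The principal technical difficulty is justifying the above differential calculus when $\Gamma$ is only the concave envelope of $u$, hence in general merely Lipschitz and with possibly degenerate Hessian, so that the curves $\rho\mapsto x(\rho,\omega)$ need not be smooth nor even single--valued. I foresee two essentially equivalent remedies: (i) regularize $\Gamma$ by strictly concave smooth functions $\Gamma_\varepsilon$ via sup--convolution, run the above argument for $\Gamma_\varepsilon$, and pass to the limit in the integrated inequality; or (ii) invoke Alexandrov's theorem on a.e.\ twice--differentiability of concave functions, combined with the Caffarelli--Cabr\'e perturbation to produce genuine $\mathcal{C}^2$ upper test functions at almost every contact point, and then show that the set of directions $\omega$ for which the curve $\rho\mapsto x(\rho,\omega)$ is well--defined and absolutely continuous has full $\mathcal{H}^{n-1}$--measure in $S^{n-1}$ (so that the 1D integration is legitimate for at least one, hence for almost every, $\omega$). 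The careful passage to the limit, and the correct handling of the subsolution condition at the Alexandrov points of $\Gamma$, is where the bulk of the technical work lies.
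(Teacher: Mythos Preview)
Your heuristic computation in the smooth, strictly concave case is correct, and the route is genuinely different from the paper's. The paper never fixes a direction $\omega$: it estimates the weighted volume $\int_{\nabla\Gamma(\Omega^*)}|\xi|^{2-n}\,d\xi$ from below by the ball $B_{M/d}$ and from above by the chain Area formula $\rightarrow$ Coarea formula $\rightarrow$ Hadamard's inequality
\[
\det(-D^2\Gamma)\;\le\;(-\Delta_\infty^N\Gamma)\,|\nabla\Gamma|^{n-1}\prod_{i=1}^{n-1}\kappa_i
\]
$\rightarrow$ Gauss--Bonnet $\int_{\{\Gamma=r\}}\prod_i\kappa_i\,d\mathcal H^{n-1}=\mathcal H^{n-1}(S^{n-1})$, after which the surface factors cancel. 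Thus the tangential curvatures are controlled only \emph{in average} over each level set. Your argument replaces Hadamard\,$+$\,Gauss--Bonnet by the single Cauchy--Schwarz bound $\omega^\top A^{-1}\omega\ge(\omega^\top A\omega)^{-1}$, which works direction by direction; this makes the dimension--independence transparent and sidesteps all the geometric measure theory.

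The price is exactly the regularity issue you flag, and your remedy (i) does not quite deliver: the paper sup--convolves $u$ (not $\Gamma$), which makes $u$ semiconvex and hence $\Gamma\in\mathcal C^{1,1}_{\rm loc}$---enough for the Lipschitz map $\nabla\Gamma$ to feed into the Area/Coarea formulas without ever being inverted. Sup--convolving $\Gamma$ yields the same $\mathcal C^{1,1}$ regularity and no more: you still get neither strict concavity nor $\mathcal C^2$, so $(D^2\Gamma)^{-1}$ and the curve $\rho\mapsto x(\rho,\omega)$ remain problematic. A workable fix in the spirit of your remedy (ii) is to move to the conjugate: setting $\psi_\omega(\rho)=(-\Gamma)^*(-\rho\omega)$ one has $r(\rho)=\psi_\omega(\rho)-\rho\,\psi_\omega'(\rho)$ and $r'(\rho)=-\rho\,\psi_\omega''(\rho)$, so your master inequality becomes the lower bound $\psi_\omega''\ge 1/f^+$ for the one--variable convex function $\psi_\omega$; the absolute continuity and change of variables you need are then standard for a.e.\ $\omega$, and the singular part of $d\psi_\omega'$ only helps.
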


\begin{proof}
We prove the first inequality, since the second one is similar. The proof is divided in two steps, in the  first one we prove the result in the case of a semiconvex subsolution $v$ of \eqref{eq.th.ABP.sub} and then we shall remove this assumption in the second step.

\medskip

\noindent \textsc{Step 1. } We can assume without loss of generality that $\sup_{\partial\Omega} v^+=0$   and $v(x_0)>0$ for some $x_0 \in \Omega$. Let us consider the function  $v^+$ extended by $0$ to the whole $\mathbb{R}^n$. Notice that  $v^+$ is also semiconvex 
and satisfies
\[
-\Delta_\infty^N v^+\leq f^+ \qquad\text{in}\ \mathbb{R}^n.
\]
Let $\Omega^*={\rm conv} (\Omega) $, the convex hull of $\Omega$ and for every  $\sigma>0$ let us  consider  $\Gamma_\sigma (v^+)$, the concave envelope of $v^+$ in $\Omega^*_\sigma$, where
$$
\Omega^*_\sigma=\{x \in\mathbb{R}^n \textrm{ such that } {\rm dist}(x,\Omega^*)\le\sigma\}. 
$$
Since we are assuming that $v$ is semiconvex, Lemma \ref{lemma.cici} implies that $\Gamma_\sigma (v^+)$ is $\mathcal{C}_{\rm loc}^{1,1}(\Omega^*_\sigma)$.

Our aim is to estimate 
\begin{equation}\label{ohyeah}
\int_{\nabla \Gamma_\sigma (v^+)(\Omega^*_\sigma)} |\xi|^{2-n}d\xi,
\end{equation}
both from above and from below. For the sake of brevity, let us denote in the sequel 
\[
M:=\sup_{\Omega^*_\sigma}\Gamma_\sigma (v^+)=\sup_\Omega v^+=\sup_\Omega v
\]
and
\[
A_\sigma =C_\sigma ^+(v^+)\cap\left\{\text{Points of twice differentiability for }\Gamma_\sigma (v^+)\right\},
\]
where $C_\sigma ^+(v^+)$ is the set of points in $\Omega^*_\sigma$ where $v^+=\Gamma_\sigma(v^+)$.
First we estimate \eqref{ohyeah} from above. Applying  the Area Formula for Lipschitz maps  \cite[Theorem 3.2.5]{Federer}, Lemma \ref{lemma.cici}, and then the Coarea Formula \cite[Theorem 3.2.11]{Federer}, we get,
\begin{equation}\label{bungabungalele}
\begin{split}
& \int_{\nabla \Gamma_\sigma (v^+)(\Omega^*_\sigma )} |\xi|^{2-n}d\xi\leq \int_{\Omega^*_\sigma} |\nabla \Gamma_\sigma(v^+)|^{2-n}\cdot\det(-D^2 \Gamma_\sigma (v^+))\,dx\\
&= \int_{\Omega^*_\sigma}1_{A_\sigma}(x)\cdot |\nabla\Gamma_\sigma(v^+)|^{2-n}\cdot \det(-D^2 \Gamma_\sigma(v^+))\,dx\\
&=\int_{0}^{M}\int_{\{\Gamma_\sigma(v^+)=r\}}1_{A_\sigma} (x)\cdot|\nabla\Gamma_\sigma (v^+)|^{1-n} \cdot\det(-D^2 \Gamma_\sigma(v^+))\,d\mathcal{H}^{n-1}dr.
\end{split}
\end{equation}

From  Hadamard's formula \cite[Section 2.10]{BB} for the determinant of a positive definite matrix and Proposition \ref{proposition.hardcore}, we get that for a.e. $r\in(0,M)$, 
\begin{equation}\label{Hadamard.Guido}
\det(- D^2 \Gamma_\sigma (v^+))\leq -\Delta_\infty^N  \Gamma_\sigma (v^+)\cdot |\nabla_\sigma \Gamma(v^+)|^{n-1}\cdot \prod_{i=1}^{n-1} \kappa_i
\end{equation}
holds $\mathcal{H}^{n-1}\text{-a.e.}$ in $\{\Gamma_\sigma(v^+)=r\}$, for $\kappa_i(x)$ the principal curvatures of $\{\Gamma_\sigma(v^+)=r\}$ at $x$, as defined in Proposition \ref{proposition.hardcore}.
Hence, from \eqref{bungabungalele}, \eqref{Hadamard.Guido} and Lemma \ref{pointwise} we have,
\begin{equation}\label{35centcoffee}
\begin{split}
& \int_{\nabla \Gamma_\sigma (v^+)(\Omega^*_\sigma)} |\xi|^{2-n}d\xi\\
 &\hspace{20pt}\leq  \int_{0}^{M} \int_{\{\Gamma_\sigma (v^+)=r\}} 1_{A_\sigma} (x)\cdot f^+(x)\cdot\prod_{i=1}^{n-1}\kappa_i(x)\;d\mathcal{H}^{n-1}dr\\
 &\hspace{20pt}\leq \int_{0}^{M}\|f^+\,\cdot1_{C_\sigma ^+(v^+)}\|_{L^\infty(\{v^+=r\})}\int_{\{\Gamma_\sigma (v^+)=r\}}\prod_{i=1}^{n-1}\kappa_i(x)\;d\mathcal{H}^{n-1}\,dr\\
 &\hspace{20pt}\leq\mathcal{H}^{n-1}(\partial B_1(0)) \,\int_{0}^{M}\|f^+\,\cdot1_{C^+_\sigma(v^+)}\|_{L^\infty(\{v^+=r\})}\,dr.
\end{split}
\end{equation}
The last step follows from the Gauss-Bonnet theorem, Theorem \ref{GBthheorem}.

\medskip

To estimate  \eqref{ohyeah} from below, we observe that 
\[
\partial K_{x_0,\sigma}(\Omega_\sigma^*)\subset\nabla \Gamma_\sigma (v^+)(\Omega_\sigma^*) 
\]
where $x_0 $ is a maximum point in $\Omega^*$ for $v^+ (x)$, $K_{x_0,\sigma}(x)$ is the  concave cone with vertex in $(x_0, v^+(x_0))$  and base $\Omega_\sigma^*$ and $\partial K_{x_0,\sigma}$ its super-differential (see \cite[Section 1.4]{Gutierrez}). Collecting the  previous information we get
\begin{multline}\label{marta}
\int_{\partial K_{x_0,\sigma}(\Omega_\sigma^*)}|\xi|^{2-n}d\xi\\
 \le \mathcal{H}^{n-1}(\partial B_1(0)) \,\int_{0}^{M}\|f^+\,\cdot1_{C^+_\sigma(v^+)}\|_{L^\infty(\{v^+=r\})}\,dr.
\end{multline}  
 We want now to pass to the limit as $\sigma $ goes to $0$. It is easy to see that $\partial K_{x_0,\sigma}(\Omega_\sigma^*) \uparrow \partial K_{x_0}(\Omega^*)$. We need now to estimate the limit of the right-hand side, to do this notice that $\Gamma_\sigma (v^+)$ is a  decreasing sequence of concave functions which thus converges to a concave function $w$ defined on $\Omega$ such that $w\ge \Gamma(v^+)\ge v^+$, so in particular
 \[
 \limsup_{\sigma \to 0} C^+_\sigma (v^+) \subset \{w=v^+\}\subset C^+(v^+).
\]
The last equation  implies that for every $r\in (0,M)$
\[
 \limsup_{\sigma \to 0} \|f^+ \cdot1_{C^+_\sigma(v^+)}\|_{L^\infty(\{v^+=r\})}\leq \|f^+\cdot 1_{C^+(v^+)}\|_{L^\infty(\{v^+=r\})}.
\]
Thus, \eqref{marta} becomes 
\begin{multline}\label{marta2}
\int_{\partial K_{x_0}(\Omega^*)}|\xi|^{2-n}d\xi \\
\le \mathcal{H}^{n-1}(\partial B_1(0)) \,\int_{0}^{M}\|f^+\,\cdot1_{C^+(v^+)}\|_{L^\infty(\{v^+=r\})}\,dr.
\end{multline}

It is now very easy to see that 
\[
B_{\frac{M}{d}}(0)\subset \partial K_{x_0}(\Omega^*).
\]
where $d={\rm diam}(\Omega^*)={\rm diam}(\Omega)$ and $M=\sup_{\Omega} v=\sup_{\Omega^*} K_{x_0} (x)$. Consequently,
\begin{equation}\label{estimate.below}
\begin{split}
 \int_{\partial K_{x_0}(\Omega^*)} |\xi|^{2-n}d\xi&\geq \int_{B_{\frac{M}{d}}(0)} |\xi|^{2-n}\,d\xi=\frac {\mathcal{H}^{n-1}(\partial B_1(0))}{2}\left(\frac{M}{d}\right)^2.
\end{split}
\end{equation}
Collecting \eqref{marta2} and \eqref{estimate.below} we arrive at
$$
(\sup_{\Omega} v^+)^2\leq 2 d^2\int_0^{\sup_{\Omega}v^+}\|f^+\,\cdot1_{C^+(v^+)}\|_{L^\infty(\{v^+=r\})}\, dr.
$$
\bigskip

\noindent\textsc{Step 2. } Now we consider the case of a general viscosity subsolution  $u$ of \eqref{eq.th.ABP.sub}. Then, the sup-convolution of $u$, defined as
\begin{equation}\label{u.eps}
u^\varepsilon (x)=\sup_{y\in  \overline\Omega}\left\{u(y)-\frac{1}{2\varepsilon}|x-y|^2\right\},
\end{equation}
 is semiconvex and verifies
$$
-\Delta_\infty^N {u^\varepsilon}  \leq f_\varepsilon(x):= \sup\big\{f(y):\ y\in B_{2(\varepsilon\|u\|_{L^\infty(\Omega)})^{1/2}}(x)\big\}\qquad \text{in}\,\,\, \Omega_\varepsilon
$$
where 
$$
\Omega_\varepsilon:=\{ x\in \Omega : {\rm dist}(x,\partial \Omega)>2(\varepsilon\|u\|_{L^\infty(\Omega)})^{1/2}\}.
$$
Moreover, $u\leq u^\varepsilon$ and $u^\varepsilon\to u$ uniformly as $\varepsilon\to0$ in $\overline\Omega$, see \cite[Chapter 5]{CC}.

From the previous step we know that
\begin{equation*}
\begin{split}
&\big(\sup_{\Omega_\varepsilon} ({u^\varepsilon})^+-\sup_{\partial \Omega_\varepsilon} ({u^\varepsilon})^+\big)^2\\
&\hspace{35pt}\leq 2\,{\rm diam}(\Omega_\varepsilon)^2\, \int_{\sup_{\partial \Omega_\varepsilon}     (u^\varepsilon )^+}^{\sup_{\Omega_\varepsilon} ({u^\varepsilon})^+}\|f^+_\varepsilon\,\cdot1_{C^+((u^\varepsilon)^+) }\|_{L^\infty(\{({u^\varepsilon})^+=r\})}dr.
\end{split}
\end{equation*} 
If we are able to pass to the limit in the previous inequality we shall obtain the desired estimate.  It is easy to see that actually we can pass to the limit  once we have proved that
\begin{equation*}
\limsup_{\varepsilon\rightarrow 0}\|f_\varepsilon^+ \cdot1_{C^+ ((u^\varepsilon)^+)}\|_{L^\infty(\{(u^\varepsilon)^+=r\})}\leq \|f^+\cdot
 1_{C^+ (u^+)}\|_{L^\infty(\{u^+=r\})}.
\end{equation*}
Let us  consider  a sequence $\{\varepsilon_j\}$ such that the $\limsup$ is realized and let $x_{\varepsilon_j}\in\{({u^{\varepsilon_j}})^+=r\}$ such that
\[
\|f_{\varepsilon_j}^+\,\cdot1_{C^+ ((u^{\varepsilon_j})^+)}\|_{L^\infty(\{(u^{\varepsilon_j})^+=r\})}=f
^+_{\varepsilon_j} (x_{\varepsilon_j})\,\cdot1_{C^+((u^{\varepsilon_j})^+)}(x_{\varepsilon_j}).
\]
Up to a subsequence, still denoted with $\varepsilon_j$, we can assume that $x_{\varepsilon_j}$ converges to $x_0\in \overline{\Omega}$. Since $x_{\varepsilon_j}$ belongs to $\{(u^{\varepsilon_j})^+=r\}$ and $u^{\varepsilon_j}$ converge uniformly to $u$ we obtain $u^+(x_0)=r$, in particular $x_0 \notin\partial \Omega$ for any $r > \sup_{\partial \Omega} u$.


Now since $f_{\varepsilon_j}$ converge uniformly to $f$ on compact subsets of $\Omega$ and 
$$
\limsup_{\varepsilon_j\rightarrow 0} C^+((u^{\varepsilon_j})^+)\subset C^+(u^+) 
$$
thanks to \cite[Lemma A.1]{CaCrKoSw}, we get
\begin{multline}
\limsup_{\varepsilon_j\rightarrow 0}f_{\varepsilon_j}^+(x_{\varepsilon_j})\,\cdot1_{C^+ ((u^{\varepsilon_j})^+)}(x_{\varepsilon_j})\\
\leq f^+(x_0)\,\cdot1_{C^+(u^+)}(x_0)
\leq \|f^+\,\cdot1_{C^+(u^+)}\|_{L^{\infty}(\{u^+=r\})}
\end{multline}
and hence the thesis follows. 
\end{proof}

The following is an easy consequence of Theorem \ref{ABP.inflapnorm.rigorous}.
\begin{corollary}
Let $f\in \mathcal{C}(\Omega)$ and consider 
 $u\in \mathcal{C}(\overline \Omega)$  
that satisfies
\begin{equation*}
-\Delta_\infty^N u \leq f(x)\quad\text{in}\ \Omega
\end{equation*}
in the viscosity sense. Then, the following estimate holds, 
\[
\sup_{\Omega }u\leq\sup_{\partial\Omega}u^{+}+
 2\, d^2\, 
\|f^+\|_{L^\infty(C^+(u))}.
\]
Analogously, whenever  
\begin{equation*}
-\Delta_\infty^N u \geq f(x)\quad\text{in}\ \Omega
\end{equation*}
in the viscosity sense,
\[
\sup_{\Omega }u^{-}\leq\sup_{\partial\Omega}u^{-}+
 2\, d^2\, 
\|f^-\|_{L^\infty(C^+(-u))},
\]
where  $d={\rm diam}\left( \Omega \right)$. 
\end{corollary}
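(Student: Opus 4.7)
The plan is to deduce the corollary directly from Theorem \ref{ABP.inflapnorm.rigorous} by bounding the integrand by an $L^\infty$ norm and then dividing through by a common factor. For the subsolution case, if $\sup_{\Omega} u \leq \sup_{\partial\Omega} u^+$, the inequality is trivial, so I may assume $\sup_{\Omega} u > \sup_{\partial\Omega} u^+$. For each $r$ in the range of integration, the set $\{u^+ = r\}$ is contained in $\Omega$, hence
\[
\|f^+\cdot 1_{C^+(u)}\|_{L^\infty(\{u^+=r\})} \leq \|f^+\|_{L^\infty(C^+(u))}.
\]

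Substituting this pointwise (in $r$) estimate into \eqref{ABP.est.inflapnorm.sub} and pulling the constant out of the integral yields
\[
\big(\sup_{\Omega} u - \sup_{\partial\Omega} u^+\big)^2 \leq 2 d^2 \,\|f^+\|_{L^\infty(C^+(u))}\,\big(\sup_{\Omega} u - \sup_{\partial\Omega} u^+\big).
\]
Dividing both sides by the strictly positive quantity $\sup_{\Omega} u - \sup_{\partial\Omega} u^+$ gives the first asserted inequality.

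The supersolution case is identical: bound the integrand in \eqref{ABP.est.inflapnorm.sup} by $\|f^-\|_{L^\infty(C^+(-u))}$, integrate the constant over an interval of length $\sup_{\Omega} u^- - \sup_{\partial\Omega} u^-$, and divide. There is no real obstacle here — the entire argument is an elementary manipulation of the theorem, with the only point requiring a moment's thought being the trivial case when the left-hand side vanishes.
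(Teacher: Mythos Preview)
Your argument is correct and is exactly the elementary manipulation the paper has in mind when it calls the corollary ``an easy consequence of Theorem \ref{ABP.inflapnorm.rigorous}''. The paper does not spell out a proof, but bounding the integrand by $\|f^+\|_{L^\infty(C^+(u))}$, integrating the constant, and cancelling the common factor is the intended route.
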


\begin{remark}
Similar results can be derived from Theorems \ref{thm.ABP.inflap3homog}, \ref{ABP.plapnorm.stable}, and \ref{thm.ABP.full.plap} below.
\end{remark}

If  we replace \eqref{ohyeah} by
\[
\int_{\nabla \Gamma_\sigma (v^+)(\Omega^*_\sigma)} |\xi|^{4-n}d\xi,
\]
the  same proof yields the following result.

\begin{theorem}\label{thm.ABP.inflap3homog}
Let $f\in \mathcal{C}(\Omega)$ and consider 
 $u\in \mathcal{C}(\overline \Omega)$  
that satisfies
\[
-\Delta_\infty u \leq f(x)\quad\text{in}\ \Omega
\]
in the viscosity sense. Then, we have
\[
\big(\sup_{\Omega }u-\sup_{\partial\Omega}u^{+}\big)^4\leq 4\cdot d^4\int _{\sup_{\partial\Omega}u^+}^{\sup_{\Omega}u}
\|f^+\cdot1_{C^+(u)}\|_{L^\infty(\{u^{+}=r\})}\,dr,
\]
where  $d={\rm diam}\left( \Omega \right)$. Analogously, whenever  
\[
-\Delta_\infty u \geq f(x)\quad\text{in}\ \Omega
\]
in the viscosity sense, the following estimate holds, 
\[
\big(\sup_{\Omega }u^{-}-\sup_{\partial\Omega}u^{-}\big)^4\leq 4\cdot d^4\int _{\sup_{\partial\Omega }u^{-}}^{\sup_{\Omega }u^{-}}
\|f^-\cdot1_{C^+(-u)}\|_{L^\infty(\{u^{-}=r\})}\,dr.
\]
\end{theorem}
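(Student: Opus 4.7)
The plan is to run the proof of Theorem~\ref{ABP.inflapnorm.rigorous} essentially verbatim, replacing the test integral $\int_{\nabla\Gamma_\sigma(v^+)(\Omega^*_\sigma)}|\xi|^{2-n}\,d\xi$ in \eqref{ohyeah} with
\begin{equation*}
I_\sigma := \int_{\nabla\Gamma_\sigma(v^+)(\Omega^*_\sigma)}|\xi|^{4-n}\,d\xi.
\end{equation*}
The exponent shift from $2-n$ to $4-n$ is engineered precisely so that, in the upper estimate, two extra powers of $|\nabla\Gamma_\sigma(v^+)|$ appear and can be absorbed using the algebraic identity $\Delta_\infty w = |\nabla w|^2\,\Delta_\infty^N w$, valid wherever $\nabla w\neq 0$; this is what converts $\Delta_\infty^N$ into $\Delta_\infty$ in the right-hand side, while preserving the geometric Gauss--Bonnet ingredient.

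For the upper bound, applying the Area and Coarea formulas as in \eqref{bungabungalele} produces
\begin{equation*}
I_\sigma \leq \int_0^M\int_{\{\Gamma_\sigma(v^+)=r\}} 1_{A_\sigma}\,|\nabla\Gamma_\sigma(v^+)|^{3-n}\,\det(-D^2\Gamma_\sigma(v^+))\,d\mathcal{H}^{n-1}dr,
\end{equation*}
with $M=\sup_\Omega v^+$ and $A_\sigma$ as before. The Hadamard-type bound \eqref{Hadamard.Guido}, combined with the identity above, yields
\begin{equation*}
|\nabla\Gamma_\sigma(v^+)|^{3-n}\det(-D^2\Gamma_\sigma(v^+))\leq -\Delta_\infty\Gamma_\sigma(v^+)\cdot\prod_{i=1}^{n-1}\kappa_i.
\end{equation*}
The direct analogue of Lemma~\ref{pointwise} for the non-normalized operator bounds the last quantity by $f^+(x)\prod_i\kappa_i$ on the contact set: at contact points one has $\nabla\Gamma_\sigma(v^+)=\nabla v^+$ and $D^2\Gamma_\sigma(v^+)\geq D^2 v^+$, hence $-\Delta_\infty\Gamma_\sigma(v^+)\leq -\Delta_\infty v^+\leq f^+$. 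The Gauss--Bonnet theorem then absorbs the curvature product as in \eqref{35centcoffee} and delivers
\begin{equation*}
I_\sigma \leq \mathcal{H}^{n-1}(\partial B_1(0))\int_0^M\|f^+\cdot 1_{C^+_\sigma(v^+)}\|_{L^\infty(\{v^+=r\})}\,dr.
\end{equation*}

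For the lower bound, the cone-comparison inclusion $B_{M/d}(0)\subset\partial K_{x_0}(\Omega^*)\subset\nabla\Gamma_\sigma(v^+)(\Omega^*_\sigma)$ still holds with $d=\mathrm{diam}(\Omega)$, and a direct polar-coordinate computation gives
\begin{equation*}
\int_{B_{M/d}(0)}|\xi|^{4-n}d\xi=\mathcal{H}^{n-1}(\partial B_1(0))\int_0^{M/d}r^{3}\,dr=\frac{\mathcal{H}^{n-1}(\partial B_1(0))}{4}\Big(\frac{M}{d}\Big)^4,
\end{equation*}
which is the unique source of the quartic power and the constant $1/4$ in the statement. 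Comparing the two bounds and letting $\sigma\to 0$ (via the same monotone-convergence and upper-semicontinuity argument on contact sets used in Theorem~\ref{ABP.inflapnorm.rigorous}, relying on \cite[Lemma~A.1]{CaCrKoSw}) yields the desired inequality for semiconvex subsolutions. The sup-convolution approximation of Step~2 of Theorem~\ref{ABP.inflapnorm.rigorous} carries through unchanged, since $u^\varepsilon$ is a viscosity subsolution of $-\Delta_\infty u^\varepsilon\leq f_\varepsilon$ by the same argument. The supersolution statement follows by applying the subsolution one to $-u$ with $u^-$ in place of $u^+$. The only genuinely new check, and the main (still minor) obstacle, is the pointwise contact-set comparison for $\Delta_\infty$ described above; everything else is a bookkeeping modification of the exponents.
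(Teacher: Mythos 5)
Your proposal is correct and follows exactly the route the paper intends: the paper itself only states (in one sentence preceding the theorem) that replacing the weight $|\xi|^{2-n}$ with $|\xi|^{4-n}$ in the proof of Theorem~\ref{ABP.inflapnorm.rigorous} gives the result, and you have correctly fleshed out that the extra two powers of $|\nabla\Gamma_\sigma(v^+)|$ surviving the area/coarea step combine with the normalized Hadamard bound to produce the non-normalized $-\Delta_\infty\Gamma_\sigma(v^+)$, which is controlled by $f^+$ on the contact set directly via Lemma~\ref{pointwise} (applied with $\mathcal{F}(\xi,X)=-\langle X\xi,\xi\rangle$, which is continuous so $\mathcal{F}_*=\mathcal{F}$ — no separate analogue of the lemma is needed, a minor wording quibble), while the lower bound $\int_{B_{M/d}}|\xi|^{4-n}d\xi=\tfrac14\mathcal{H}^{n-1}(\partial B_1)(M/d)^4$ produces the quartic power and constant $4$.
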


\medskip

\section{An stable  ABP for the normalized p-Laplacian}\label{section.ABP.plap}

The following is one of the  main results in this section.

\begin{theorem}\label{ABP.plapnorm.stable}
Let $1<p<\infty $, $f\in \mathcal{C}(\Omega)$ and consider 
 $u\in \mathcal{C}(\overline \Omega)$  
that satisfies
\begin{equation*}
-\Delta_p^N u \leq f(x)\quad\text{in}\ \Omega
\end{equation*}
in the viscosity sense. Then, we have
\begin{equation}\label{ABP.est.plapnorm.sub}
\big(\sup_{\Omega }u-\sup_{\partial\Omega}u^{+}\big)^2\leq\frac{2\,p\,d^2}{p-1}\, \int _{\sup_{\partial\Omega}u^+}^{\sup_{\Omega}u}
\|f^+\cdot1_{C^+(u)}\|_{L^\infty(\{u^{+}=r\})}\,dr,
\end{equation}
where  $d={\rm diam}\left( \Omega \right)$. Analogously, whenever  
\begin{equation*}
-\Delta_p^N u \geq f(x)\quad\text{in}\ \Omega
\end{equation*}
in the viscosity sense, the following estimate holds, 
\begin{equation}\label{ABP.est.plapnorm.sup}
\big(\sup_{\Omega }u^{-}-\sup_{\partial\Omega}u^{-}\big)^2\leq \frac{2\,p\,d^2}{p-1}\, \int _{\sup_{\partial\Omega }u^{-}}^{\sup_{\Omega }u^{-}}
\|f^-\cdot1_{C^+(-u)}\|_{L^\infty(\{u^{-}=r\})}\,dr.
\end{equation}
\end{theorem}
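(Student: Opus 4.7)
The plan is to follow the proof of Theorem~\ref{ABP.inflapnorm.rigorous} almost verbatim, inserting one algebraic identity that converts $\Delta_p^N$ into $\Delta_\infty^N$ at the cost of the factor $p/(p-1)$. First I reduce to the case of a semiconvex subsolution via the sup-convolution $u^\varepsilon$ defined in \eqref{u.eps}; exactly as for $\Delta_\infty^N$, sup-convolution preserves viscosity subsolutions of $-\Delta_p^N u\le f$ up to a modulus on the right-hand side, giving $-\Delta_p^N u^\varepsilon\le f_\varepsilon$ in $\Omega_\varepsilon$. With $v$ such a semiconvex subsolution, I extend $v^+$ by zero, consider the concave envelope $\Gamma_\sigma(v^+)$ on the enlarged convex hull $\Omega_\sigma^*$, and repeat the area $+$ coarea $+$ Hadamard chain that produced \eqref{35centcoffee}; the output is unchanged:
\[
\int_{\nabla\Gamma_\sigma(v^+)(\Omega_\sigma^*)}|\xi|^{2-n}\,d\xi\le \int_0^M\!\!\int_{\{\Gamma_\sigma(v^+)=r\}}\!\! 1_{A_\sigma}\cdot(-\Delta_\infty^N\Gamma_\sigma(v^+))\cdot\prod_{i=1}^{n-1}\kappa_i\,d\mathcal{H}^{n-1}dr,
\]
with $M=\sup_\Omega v^+$ and $A_\sigma$ the set of points of twice differentiability of $\Gamma_\sigma(v^+)$ inside the contact set.

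The only new ingredient is a pointwise comparison. Since $\Gamma:=\Gamma_\sigma(v^+)$ is concave, its Hessian is negative semidefinite, so writing $-D^2\Gamma$ in an orthonormal basis whose first vector is $\nabla\Gamma/|\nabla\Gamma|$ one has
\[
-\Delta_p^N\Gamma=\frac{p-1}{p}(-\Delta_\infty^N\Gamma)+\frac{1}{p}\sum_{i=1}^{n-1}(-\lambda_i),
\]
where the tangential eigenvalues satisfy $-\lambda_i\ge 0$. Consequently
\[
-\Delta_\infty^N\Gamma\le \frac{p}{p-1}(-\Delta_p^N\Gamma)\qquad\text{on } A_\sigma.
\]
Coupling this with the $p$-Laplacian analogue of Lemma~\ref{pointwise} (obtained by touching $v^+$ from above by $\Gamma+\delta|x-x_0|^2$, letting $\delta\to 0$ in the viscosity inequality $-\Delta_p^N v^+\le f^+$, and invoking Definition~\ref{def.visc.plapnorm} to take care of the measure-zero set $\{\nabla\Gamma=0\}\cap A_\sigma$) yields
\[
-\Delta_\infty^N\Gamma\le \frac{p}{p-1}f^+\qquad \text{a.e.\ on } A_\sigma.
\]

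Substituting this into the display above, applying the Gauss--Bonnet bound $\int_{\{\Gamma=r\}}\prod\kappa_i\,d\mathcal{H}^{n-1}\le\mathcal{H}^{n-1}(\partial B_1(0))$ and the purely geometric lower bound from the concave cone $K_{x_0,\sigma}$ (which needs no modification, being independent of the equation), one obtains
\[
\frac{\mathcal{H}^{n-1}(\partial B_1(0))}{2}\frac{M^2}{d^2}\le\frac{p}{p-1}\,\mathcal{H}^{n-1}(\partial B_1(0))\int_0^M\|f^+\cdot 1_{C^+_\sigma(v^+)}\|_{L^\infty(\{v^+=r\})}\,dr.
\]
Passing to the limits $\sigma\to 0$ and then $\varepsilon\to 0$ exactly as in Step~2 of Theorem~\ref{ABP.inflapnorm.rigorous}, using uniform convergence of $u^\varepsilon$ and of $f_\varepsilon$ on compacts and the upper semicontinuity of the contact sets, one recovers \eqref{ABP.est.plapnorm.sub}; the supersolution estimate \eqref{ABP.est.plapnorm.sup} follows by applying the same argument to $-u$. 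The principal obstacle I anticipate is precisely the verification of the $p$-Laplacian analogue of Lemma~\ref{pointwise}, because $\Delta_p^N$ is discontinuous at $\{\nabla u=0\}$; however, this only requires care in the definition of viscosity solution at critical points of $\Gamma$ and does not disturb the argument elsewhere, so the concavity inequality above is really all that is needed beyond the $\infty$-Laplacian proof.
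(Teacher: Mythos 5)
Your proposal is correct and follows essentially the same route as the paper: the core step is the decomposition $-\Delta_p^N\Gamma = \tfrac{p-1}{p}(-\Delta_\infty^N\Gamma) + \tfrac{1}{p}|\nabla\Gamma|\sum_i\kappa_i$ at contact points, combined with concavity of $\Gamma_\sigma(v^+)$ (which makes the curvature term nonnegative) to conclude $-\Delta_\infty^N\Gamma\le\tfrac{p}{p-1}f^+$, after which the argument of Theorem~\ref{ABP.inflapnorm.rigorous} runs unchanged. (One small imprecision: the quantities you call ``tangential eigenvalues'' $-\lambda_i$ are really the diagonal entries $-\langle D^2\Gamma\,\tau_i,\tau_i\rangle$ in the chosen basis rather than eigenvalues, but since $-D^2\Gamma\ge 0$ they are nonnegative regardless, so the conclusion is unaffected.)
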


\begin{proof}
We only sketch the proof of the first inequality, since the second one is similar. The
argument goes exactly in the same way of the one of Theorem \ref{ABP.inflapnorm.rigorous} with the only observation that thanks to the concavity of $\Gamma_\sigma(u^+) $ and Lemmas \ref{lemma.cici} and \ref{pointwise} and Proposition \ref{proposition.hardcore}, at almost every contact point $x \in C_\sigma^+(u^+)$ one has, 
\[
\begin{split}
-\Big(\frac{p-1}{p}\Big)&\,\Delta_\infty^N \Gamma_\sigma(u^+)(x)\\
&\leq-\Big(\frac{p-1}{p}\Big)\,\Delta_\infty^N\Gamma_\sigma(u^+)(x)+\frac 1 p\,|\nabla \Gamma_\sigma(u^+)(x)|\,\sum_{i=1}^{n-1}\kappa_i(x)\\
&=-\Delta_{p}^N \Gamma_\sigma(u^+)(x)\leq f^+(x),
\end{split}
\]
where $\kappa_i(x)$ $i=1,\ldots,n-1$ denote, as before, the principal curvatures at $x$ of the level set of $\Gamma_\sigma (u^+)$ passing through $x$.
\end{proof}

\begin{remark}Estimates \eqref{ABP.est.plapnorm.sub} and \eqref{ABP.est.plapnorm.sup} are stable as $p\to\infty$ and, moreover, allow to recover  estimates \eqref{ABP.est.inflapnorm.sub} and \eqref{ABP.est.inflapnorm.sup} in the limit.
\end{remark}

It is interesting to compare estimates \eqref{ABP.est.plapnorm.sub} and \eqref{ABP.est.plapnorm.sup} with the usual ABP estimates that are not stable in the limit.

The following theorem can be easily proved with the techniques of \cite{ACP,CC, DFQ,Imbert}, the proof we sketch here actually allows a comparison with the previous one, see the remarks at the end of the proof.

\begin{theorem}\label{ABP.plapnorm.norm.n}
Let $1<p<\infty$, $f\in\mathcal{C}(\Omega)$ and consider $u\in\mathcal{C}(\overline\Omega)$  that satisfies
\[
-\Delta_p^N u \leq f(x)\quad\text{in}\ \Omega,
\]
in the viscosity sense. Then, the following estimate holds, 
\[
\sup_{\Omega }u\leq \sup_{\partial\Omega}u^{+}+ \frac{d\,p}{n|B_1(0)|^\frac{1}{n}(p-1)^\frac{1}{n}} \|f^+\|_{L^n(C^+(u))},
\]
where  $d={\rm diam}\left( \Omega \right)$.

Analogously, whenever  $u\in\mathcal{C}(\overline\Omega)$  satisfies
\[
-\Delta_p^N u \geq f(x)\quad\text{in}\ \Omega
\]
in the viscosity sense, the following estimate holds, 
\[
\sup_{\Omega }u^{-}\leq \sup_{\partial\Omega}u^{-}+ \frac{d\,p}{n|B_1(0)|^\frac{1}{n}(p-1)^\frac{1}{n}} \|f^-\|_{L^n(C^+(-u))},
\]
\end{theorem}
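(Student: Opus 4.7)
The plan is to follow exactly the template of the proof of Theorem \ref{ABP.plapnorm.stable} (and hence of Theorem \ref{ABP.inflapnorm.rigorous}), modifying only the weight used in the integration over the image of the gradient of the concave envelope. Instead of considering $\int_{\nabla\Gamma_\sigma(v^+)(\Omega^*_\sigma)}|\xi|^{2-n}d\xi$, the idea is to integrate the constant weight $1$, so that the quantity of interest becomes the Lebesgue measure $|\nabla\Gamma_\sigma(v^+)(\Omega^*_\sigma)|$, which by the area formula is controlled by $\int_{C^+_\sigma(v^+)}\det(-D^2\Gamma_\sigma(v^+))\,dx$. This is the classical ABP route, and the use of $1$ instead of $|\xi|^{2-n}$ is precisely the source of the $L^n$ norm on the right-hand side.

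As in Theorem \ref{ABP.inflapnorm.rigorous}, I would first reduce to the semiconvex case by replacing $u$ by its sup-convolution $u^\varepsilon$, for which $-\Delta_p^N u^\varepsilon\le f_\varepsilon$ holds in a suitable interior domain, use Lemma \ref{lemma.cici} to ensure $\mathcal{C}^{1,1}_{\rm loc}$ regularity of $\Gamma_\sigma(v^+)$, and pass to the limit $\varepsilon,\sigma\to 0$ exactly as before. The lower bound comes from the cone construction: since $B_{M/d}(0)\subset\partial K_{x_0}(\Omega^*)\subset \nabla\Gamma(v^+)(\Omega^*)$ with $M=\sup_\Omega v^+$ and $d={\rm diam}(\Omega)$, one gets $|\nabla\Gamma(v^+)(\Omega^*)|\ge |B_1(0)|(M/d)^n$.

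The key new ingredient is the algebraic step relating $\det(-D^2\Gamma_\sigma)$ to $-\Delta_p^N\Gamma_\sigma$ at almost every contact point. Denote by $\lambda_1,\dots,\lambda_{n-1}\ge 0$ the principal curvatures times $|\nabla\Gamma_\sigma|$ in the directions tangent to the level set (these are the eigenvalues of $-D^2\Gamma_\sigma$ orthogonal to $\nabla\Gamma_\sigma$) and by $\lambda_n\ge 0$ the eigenvalue along $\nabla\Gamma_\sigma$; concavity on the contact set gives $\lambda_i\ge 0$, and by Lemma \ref{pointwise} and Proposition \ref{proposition.hardcore} we have
\[
-\Delta_p^N\Gamma_\sigma(v^+)=\frac{1}{p}\sum_{i=1}^{n-1}\lambda_i+\frac{p-1}{p}\lambda_n.
\]
Applying weighted AM--GM to the factorization
\[
\det(-D^2\Gamma_\sigma(v^+))=\lambda_1\cdots\lambda_n=\frac{p^{n-1}}{p-1}\cdot\Bigl(\tfrac{1}{p}\lambda_1\Bigr)\cdots\Bigl(\tfrac{1}{p}\lambda_{n-1}\Bigr)\cdot\Bigl(\tfrac{p-1}{p}\lambda_n\Bigr),
\]
the classical AM--GM inequality yields
\[
\det(-D^2\Gamma_\sigma(v^+))\le\frac{p^{n-1}}{p-1}\cdot\frac{1}{n^n}\left(\sum_{i=1}^{n-1}\tfrac{1}{p}\lambda_i+\tfrac{p-1}{p}\lambda_n\right)^{n}=\left(\frac{p}{n(p-1)^{1/n}}\right)^{n}\bigl(-\Delta_p^N\Gamma_\sigma(v^+)\bigr)^{n},
\]
and on the contact set the right-hand side is bounded by $\bigl(p/(n(p-1)^{1/n})\bigr)^{n}(f^+)^n$.

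Combining the area-formula upper bound with the cone lower bound yields
\[
|B_1(0)|\Bigl(\tfrac{M}{d}\Bigr)^{n}\le\left(\frac{p}{n(p-1)^{1/n}}\right)^{n}\int_{C^+_\sigma(v^+)}(f^+)^n\,dx,
\]
and taking $n$-th roots gives the desired estimate after the same limiting procedure in $\sigma$ and $\varepsilon$ as in the proof of Theorem \ref{ABP.inflapnorm.rigorous}. The case of supersolutions is symmetric, applied to $-u$. The main obstacle is a bookkeeping one, namely checking that the weighted AM--GM constant $p/(n(p-1)^{1/n})$ is the sharp one compatible with the normalization $\frac{1}{p}\cdot(n-1)+\frac{p-1}{p}=1$ of the weights appearing in $-\Delta_p^N$; everything else is a routine transcription of Theorem \ref{ABP.plapnorm.stable}'s proof with the exponent on $|\xi|$ changed from $2-n$ to $0$.
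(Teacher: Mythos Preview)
Your approach is exactly the paper's: the geometric lower bound via the cone, the area formula for the Lebesgue measure of $\nabla\Gamma_\sigma(v^+)(\Omega^*_\sigma)$, and then Hadamard followed by AM--GM to land on $(-\Delta_p^N\Gamma_\sigma)^n$. Two slips to correct.

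First, the $\lambda_i$ you introduce are \emph{not} eigenvalues of $-D^2\Gamma_\sigma$. By Proposition~\ref{proposition.hardcore}, in the orthonormal frame $\{\tau_1,\dots,\tau_{n-1},\nu\}$ the matrix $-D^2\Gamma_\sigma$ has diagonal entries $|\nabla\Gamma_\sigma|\kappa_i$ and $-\Delta_\infty^N\Gamma_\sigma$, but also off-diagonal mixed terms $-\partial^2_{\nu\tau_i}\Gamma_\sigma$ (see \eqref{tanghessian}). Hence your equality $\det(-D^2\Gamma_\sigma)=\lambda_1\cdots\lambda_n$ is false in general; what holds is Hadamard's inequality $\det(-D^2\Gamma_\sigma)\le\lambda_1\cdots\lambda_n$, valid because $-D^2\Gamma_\sigma\ge0$ on the contact set. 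That inequality is precisely what the paper uses (cf.\ \eqref{Hadamard.Guido}) and is all you need.

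Second, an arithmetic typo: the prefactor in your factorization is $p^{n}/(p-1)$, not $p^{n-1}/(p-1)$, since $\bigl(\tfrac1p\bigr)^{n-1}\cdot\tfrac{p-1}{p}=\tfrac{p-1}{p^n}$. Your final constant $\bigl(p/(n(p-1)^{1/n})\bigr)^n=p^n/\bigl((p-1)n^n\bigr)$ is nevertheless the correct one, matching the paper's $\frac{p^n}{(p-1)n^n}$.
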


\begin{proof}
Again we don't take care of regularity issues which can be easily handled with the technique we used in the proof of Theorem \ref{ABP.inflapnorm.rigorous}.

The standard ABP argument yields,
\[
\begin{split}
&\left(\frac{\sup_\Omega u-\sup_{\partial \Omega}u^+}{d}\right)^{n}|B_1(0)|\\
&\hspace{45pt}\leq|\nabla \Gamma (u^+) \left( C^{+}\left( u \right) \right)|\leq\int_{C^{+}\left( u^+ \right)}\det (-D^2u)\,dx.
\end{split}
\]
Proposition \ref{proposition.hardcore} yields
\[
\int_{C^{+}\left( u^+ \right)}\det (-D^2u)\,dx \leq \int_{C^{+}\left( u^+ \right)}-\Delta_{\infty}^N\Gamma (u^+)\cdot|\nabla \Gamma (u^+)|^{n-1}\cdot\prod_{i=1}^{n-1} \kappa_i\,dx.
\]
Now, we multiply and divide by $p-1$ the right-hand side of the previous inequality, and then apply the inequality between the aritmethic and geometric mean inequalities. We get,
\[
\begin{split}
\int_{C^{+}\left( u^+ \right)}&-\Delta_{\infty}^N\Gamma (u^+)\cdot|\nabla \Gamma (u^+)|^{n-1}\cdot\prod_{i=1}^{n-1} \kappa_i\,dx\\
&\leq
\frac{1}{(p-1)n^n}\int_{C^{+}\left( u^+ \right)}
\Big(-(p-1)\,\Delta_{\infty}^N\Gamma( u^+)+|\nabla \Gamma (u^+)|\,\sum_{i=1}^{n-1}\kappa_i\Big)^n\,dx\\
&=
\frac{p^n}{(p-1)n^n}\int_{C^{+}\left( u^+ \right)}
\big(-\Delta_p^N\Gamma (u^+) \big)^n\,dx\\
&\leq
\frac{p^n}{(p-1)n^n}\int_{C^{+}\left( u \right)}
\big(f^+\big)^n\,dx.
\end{split}
\]
\end{proof}
Some comments are in order. First, it is important to notice that the estimate in Theorem \ref{ABP.plapnorm.norm.n} is not stable in $p$, as  the resulting constant blows up as $p\to\infty$. Hence we don't recover any estimate in the limit, in contrast to Theorem \ref{ABP.plapnorm.stable} that yields Theorem \ref{ABP.inflapnorm.rigorous} as a limit case.

This fact can be understood by comparing the  proofs of Theorems \ref{ABP.plapnorm.stable} and \ref{ABP.plapnorm.norm.n}. In fact, they make apparent that the infinity Laplacian controls one single direction, the direction of steepest descent, totally neglecting the curvature of the level sets (whose average is in turn controlled by the Gauss-Bonet Theorem). The $p$-Laplacian, instead, is a weighted mean of the curvatures of the level sets and the normal direction. In particular, when $p=2$, the classical Laplacian, all the directions are weighted the same way.

In this sense, it is completely natural that in the $p$-Laplacian case the ABP involves an integral norm, an average in all directions, while in the infinity Laplacian case the average is somehow unidimensional.

\medskip

If we replace 
\eqref{ohyeah} by
\[
\int_{\nabla \Gamma_\sigma (v^+)(\Omega^*_\sigma)} |\xi|^{p-n}d\xi,
\]
the  same proof of Theorem \ref{ABP.plapnorm.stable} yields similar estimates for the variational $p$-Laplacian \eqref{plap}.

\begin{theorem}\label{thm.ABP.full.plap}

Let $1<p<\infty $, $f\in \mathcal{C}(\Omega)$ and consider 
 $u\in \mathcal{C}(\overline \Omega)$  
that satisfies
\begin{equation*}
-\Delta_p u \leq f(x)\quad\text{in}\ \Omega
\end{equation*}
in the viscosity sense. Then, we have
\begin{equation*}
\big(\sup_{\Omega }u-\sup_{\partial\Omega}u^{+}\big)^p\leq\frac{ p \,d^p}{p-1}\, \int _{\sup_{\partial\Omega}u^+}^{\sup_{\Omega}u}
\|f^+\cdot1_{C^+(u)}\|_{L^\infty(\{u^{+}=r\})}\,dr,
\end{equation*}
where  $d={\rm diam}\left( \Omega \right)$. Analogously, whenever  
\begin{equation*}
-\Delta_p u \geq f(x)\quad\text{in}\ \Omega
\end{equation*}
in the viscosity sense, the following estimate holds, 
\begin{equation*}
\big(\sup_{\Omega }u^{-}-\sup_{\partial\Omega}u^{-}\big)^p\leq \frac{ p \,d^p}{p-1}\, \int _{\sup_{\partial\Omega }u^{-}}^{\sup_{\Omega }u^{-}}
\|f^-\cdot1_{C^+(-u)}\|_{L^\infty(\{u^{-}=r\})}\,dr.
\end{equation*}

\end{theorem}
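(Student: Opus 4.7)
The plan is to run the proof of Theorem \ref{ABP.plapnorm.stable} essentially verbatim, substituting the weight $|\xi|^{2-n}$ by $|\xi|^{p-n}$ in the pivot integral, which is exactly the change suggested by the authors in the remark preceding the statement. First I would perform the sup-convolution reduction of Step 2 of the proof of Theorem \ref{ABP.inflapnorm.rigorous} to assume that $u$ is semiconvex and $\sup_{\partial\Omega}u^+=0$, then form the concave envelope $\Gamma_\sigma(u^+)$ on the $\sigma$-neighbourhood $\Omega^*_\sigma$ of $\mathrm{conv}(\Omega)$ (which is $\mathcal{C}^{1,1}_{\rm loc}$ by Lemma \ref{lemma.cici}), and write $M:=\sup_\Omega u$.

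Next I would estimate $I_\sigma:=\int_{\nabla\Gamma_\sigma(u^+)(\Omega^*_\sigma)}|\xi|^{p-n}\,d\xi$ from above by the Area Formula, the Coarea Formula, and Hadamard's inequality (Proposition \ref{proposition.hardcore}) exactly as in \eqref{bungabungalele}--\eqref{35centcoffee}. The three resulting weights $|\nabla\Gamma_\sigma(u^+)|^{p-n}$, $|\nabla\Gamma_\sigma(u^+)|^{-1}$ and $|\nabla\Gamma_\sigma(u^+)|^{n-1}$ telescope into the single factor $|\nabla\Gamma_\sigma(u^+)|^{p-2}$ inside the tangential integrand. The key pointwise estimate (replacing Lemma \ref{pointwise}) is then the following: at almost every contact point of twice differentiability, concavity of $\Gamma_\sigma(u^+)$ forces $-\Delta_\infty^N\Gamma_\sigma(u^+)\ge 0$ and $\sum_{i=1}^{n-1}\kappa_i\ge 0$, so the pointwise version of $-\Delta_p\Gamma_\sigma(u^+)\le f^+$ may be rewritten as
\[
(p-1)|\nabla\Gamma_\sigma(u^+)|^{p-2}\bigl(-\Delta_\infty^N\Gamma_\sigma(u^+)\bigr)
+|\nabla\Gamma_\sigma(u^+)|^{p-1}\sum_{i=1}^{n-1}\kappa_i \le f^+,
\]
and discarding the nonnegative second summand produces the sharp bound $|\nabla\Gamma_\sigma(u^+)|^{p-2}(-\Delta_\infty^N\Gamma_\sigma(u^+))\le f^+/(p-1)$. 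Combining this with Gauss--Bonnet (Theorem \ref{GBthheorem}) gives
\[
I_\sigma\le \frac{\mathcal{H}^{n-1}(\partial B_1(0))}{p-1}\int_0^M \|f^+\cdot 1_{C^+_\sigma(u^+)}\|_{L^\infty(\{u^+=r\})}\,dr.
\]

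For the matching lower bound on $I_\sigma$, I would reuse the concave-cone inclusion $B_{M/d}(0)\subset\partial K_{x_0}(\Omega^*)\subset\nabla\Gamma(\Omega^*)$ from Theorem \ref{ABP.inflapnorm.rigorous} together with the polar-coordinate computation
\[
\int_{B_{M/d}(0)}|\xi|^{p-n}\,d\xi=\frac{\mathcal{H}^{n-1}(\partial B_1(0))}{p}\Bigl(\frac{M}{d}\Bigr)^{p},
\]
which accounts precisely for both the exponent $p$ on the left-hand side of the claim and the constant $p/(p-1)$ on the right. Cancelling the common factor $\mathcal{H}^{n-1}(\partial B_1(0))$, passing to the limit as $\sigma\to 0$ via the semicontinuity of the contact set and the uniform convergence of the sup-convolutions exactly as in Theorem \ref{ABP.inflapnorm.rigorous}, and undoing the initial normalization on $\sup_{\partial\Omega}u^+$ delivers the subsolution estimate; the supersolution case is identical. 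The main obstacle I anticipate is the rigorous justification of the pointwise inequality $-\Delta_p\Gamma_\sigma(u^+)\le f^+$ at a.e.\ contact point, since the gradient-weight $|\nabla\cdot|^{p-2}$ in the non-normalized operator is singular for $p<2$ and degenerate for $p>2$; these features, however, affect only a measure-zero set of contact values and are handled by the standard Jensen-type approximation implicit in the references cited around Lemma \ref{pointwise}.
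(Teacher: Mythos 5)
Your proposal is correct and takes exactly the approach the paper intends: you replace the weight $|\xi|^{2-n}$ by $|\xi|^{p-n}$ so that the Area/Coarea/Hadamard chain produces the factor $|\nabla\Gamma_\sigma(u^+)|^{p-2}$, use concavity to discard the tangential-curvature contribution from $-\Delta_p\Gamma_\sigma(u^+)$ and arrive at $|\nabla\Gamma_\sigma(u^+)|^{p-2}\bigl(-\Delta_\infty^N\Gamma_\sigma(u^+)\bigr)\le f^+/(p-1)$, and balance against the lower bound $\int_{B_{M/d}(0)}|\xi|^{p-n}\,d\xi=\mathcal{H}^{n-1}(\partial B_1(0))\,(M/d)^p/p$, which is precisely where the exponent $p$ and the constant $p/(p-1)$ come from. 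The only caveat (which you rightly flag, and which the paper itself leaves implicit) is the choice of viscosity-solution framework for the variational $\Delta_p$ when $1<p<2$, where the trace-form operator has an unbounded singularity at $\nabla u=0$; this does not affect the argument at the contact points actually used, since those have nonvanishing gradient for all but one level value.
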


\medskip


\section{Stable estimates of the modulus of continuity for $p\leq\infty$}\label{section.modulus.cont}

In this section we obtain H\"older estimates for solutions of the normalized $p$-Laplacian as well as Lipschitz estimates for the normalized $\infty$-Laplacian. The main interest of this estimates is that they are stable in $p$, and apply to the whole range $n<p\leq\infty$ with all the parameters involved varying continuously. 

As mentioned in the introduction, the well-known estimates in \cite{CC} apply whenever $p<\infty$, but degenerate as $p\to\infty$ as they depend upon the ratio between the ellipticity constants (see also \cite{GT}, Section 9.7 and 9.8), which in this case is $p-1$ and  blows-up as $p\to\infty$.


For the sake of a unified presentation, throughout this section we are going to denote
\begin{equation}\label{alphap}
\alpha_p=
\left\{\begin{split}
&\frac{n-1}{p-1}\qquad\text{for}\ n<p<\infty\\
&0\hspace{44.5pt}\text{for}\ p=\infty
\end{split}\right.
\end{equation}
and
\begin{equation}\label{C_p}
C_p=
\left\{\begin{split}
&\frac{p}{p+n-2}\qquad\text{for}\ n<p<\infty\\
&1\hspace{63pt}\text{for}\ p=\infty.
\end{split}\right.
\end{equation}

\begin{theorem}\label{uniform}
Let $\Omega$ be a bounded domain,  $2\leq n<p\leq\infty$, and  $u $ a viscosity solution of
\begin{equation}\label{Dir}
-\Delta_p^{N} u=f \quad\textrm{in $\Omega$}.
\end{equation}
Then, for any $x\in\Omega$
\[
\begin{split}
\frac{|u(y)-u(x)|}{|y-x|^{1-\alpha_{p}}}\leq 
\sup_{z\in\partial\Omega}
\frac{|u(z)-u(x)|}{|z-x|^{1-\alpha_{p}}}
+
\frac{C_p}{1-\alpha_p}\;{\rm diam}(\Omega)^{1+\alpha_p}\,\|f\|_{L^\infty(\Omega)} 
\end{split}
\]
for every $y \in \overline \Omega$, with $\alpha_p$ and $C_p$ defined in \eqref{alphap} and \eqref{C_p} respectively.
\end{theorem}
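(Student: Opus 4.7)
The plan is to fix $x \in \Omega$ and construct a radial comparison function centered at $x$ that majorises $u - u(x)$, then invoke the viscosity comparison principle for $-\Delta_p^N$. Replacing $u$ by $-u$ (which satisfies $-\Delta_p^N(-u) = -f$, since both $\Delta$ and $\Delta_\infty^N$ are odd under negation) and repeating the argument will then yield the two-sided estimate on $|u(y)-u(x)|$.

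The key structural observation motivating the construction is that $r^{1-\alpha_p}$ is the radial profile of a $p$-harmonic function: for any smooth radial $v(y) = \psi(|y-x|)$ with $\psi' > 0$ one has $\Delta_p^N v = \tfrac{p-1}{p}\psi''(r) + \tfrac{n-1}{pr}\psi'(r)$, and the choice $\alpha_p = (n-1)/(p-1)$ makes this vanish for $\psi(r) = r^{1-\alpha_p}$. I would therefore take as comparison function
\[
\phi(y) := u(x) + A\,|y-x|^{1-\alpha_p} - \tfrac{1}{2}\,C_p\,\|f\|_{L^\infty(\Omega)}\,|y-x|^2,
\]
in which the quadratic correction contributes $+\|f\|_{L^\infty(\Omega)}$ to $-\Delta_p^N\phi$; this is precisely where the constant $C_p = p/(p+n-2)$ comes from, via the identity $\Delta_p^N[|y-x|^2] = 2/C_p$. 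Setting $d = \mathrm{diam}(\Omega)$ and $L_x := \sup_{z \in \partial\Omega}|u(z)-u(x)|/|z-x|^{1-\alpha_p}$, the constant $A := L_x + \tfrac{C_p}{1-\alpha_p}\,d^{1+\alpha_p}\,\|f\|_{L^\infty(\Omega)}$ is chosen so that two properties hold simultaneously: first, $\phi(z) \geq u(z)$ on $\partial\Omega$, which amounts to $A \geq L_x + \tfrac{1}{2} C_p\,\|f\|_{L^\infty(\Omega)}\,d^{1+\alpha_p}$; second, $\psi'(r) = A(1-\alpha_p)\,r^{-\alpha_p} - C_p\,\|f\|_{L^\infty(\Omega)}\,r > 0$ for $r \in (0,d)$, so that $\nabla\phi$ is nonvanishing throughout $\Omega \setminus \{x\}$. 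The binding constraint is the second one, and this is exactly what produces the factor $1/(1-\alpha_p)$ in the theorem.

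With these choices, $\phi$ is smooth and classically satisfies $-\Delta_p^N\phi = \|f\|_{L^\infty(\Omega)} \geq f$ in $\Omega \setminus \{x\}$, and is continuous up to $\overline{\Omega}$ with $\phi(x) = u(x)$. Applying the viscosity comparison principle for $-\Delta_p^N$ on the punctured domain $\Omega \setminus \{x\}$ (whose boundary is $\partial\Omega \cup \{x\}$, on which $\phi \geq u$) then gives $u \leq \phi$ in $\Omega$. Discarding the nonpositive quadratic correction yields
\[
u(y) - u(x) \leq A\,|y-x|^{1-\alpha_p} \qquad \text{for all } y \in \overline{\Omega},
\]
which is exactly the desired bound in one direction; running the same argument on $-u$ produces the matching lower bound and completes the proof.

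The hard part will be justifying the comparison step across the singularity of $\phi$ at $x$: for $\alpha_p > 0$ the gradient $\nabla\phi$ blows up as $y \to x$. The rescue is that $\phi$ remains continuous at $x$ with $\phi(x) = u(x)$, so comparison need only be invoked on $\Omega \setminus \{x\}$, where $\nabla\phi$ is nonvanishing and the operator $\Delta_p^N$ is continuous. In the limit case $p = \infty$ one has $\alpha_p = 0$ and $C_p = 1$, and $\phi$ reduces to the polar quadratic polynomial $u(x) + A|y-x| - \tfrac{1}{2}\|f\|_{L^\infty(\Omega)}|y-x|^2$ of the type employed in \cite{armstrong.smart, LuWangCPDE}, so the required comparison principle is precisely the one established there; for finite $p > n$ the operator is uniformly elliptic away from $\{\nabla u = 0\}$ and comparison is classical.
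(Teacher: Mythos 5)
Your barrier function, boundary constant, and non‑degeneracy condition are essentially the ones the paper uses (Lemmas~\ref{fs}, \ref{gradient.non.zero}, \ref{fundam}); the overall shape of the argument is the same. But the step where you ``apply the viscosity comparison principle for $-\Delta_p^N$ on the punctured domain'' is a genuine gap, for two related reasons.

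First, there is no comparison principle to invoke. As the paper notes in the introduction, uniqueness (and hence comparison) for $-\Delta_p^N u = f$ and $-\Delta_\infty^N u = f$ is only known when $f$ has constant sign; for general sign‑changing $f$ there are counterexamples to uniqueness (see \cite{armstrong.smart}). The references you cite for the case $p=\infty$ establish comparison precisely in the constant‑sign setting, not in the generality needed here. So the argument cannot rest on a black‑box comparison theorem.

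Second, and this is the fix: you do not need full comparison, only comparison of a viscosity subsolution $u$ against a \emph{classical strict} supersolution $\phi$ (smooth, $\nabla\phi\neq0$, and $-\Delta_p^N\phi > f$ strictly). That is an elementary contradiction argument via the definition of viscosity subsolution: if $u-\phi$ attained a positive interior maximum at some $\tilde x\neq x$, then $\phi$ (shifted by a constant) would be an admissible test function and
$-\Delta_p^N\phi(\tilde x)\le f(\tilde x)$, contradicting strictness. But with your choice $B=C_p\|f\|_{L^\infty(\Omega)}$ you get $-\Delta_p^N\phi = \|f\|_{L^\infty(\Omega)}$, which is only $\ge f$, and no contradiction is produced when $f(\tilde x)=\|f\|_{L^\infty(\Omega)}$. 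The paper avoids this by taking $B=(1+\varepsilon)C_p\|f\|_{L^\infty(\Omega)}$ with $\varepsilon>0$, obtaining $-\Delta_p^N\phi=(1+\varepsilon)\|f\|_{L^\infty(\Omega)}>f$ everywhere, running the touching argument, and letting $\varepsilon\to0$ at the end. With that amendment your proof closes and coincides with the paper's.

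A minor further point: once you use the touching argument instead of a boundary‑value comparison, there is no need to pass to the punctured domain $\Omega\setminus\{x\}$ at all. The paper handles the singularity at $x$ by showing directly (Lemma~\ref{fundam}) that any contact point of the translated barrier with the graph of $u$ is forced to be $x$ itself, using the boundary constant to exclude $\partial\Omega$ and the strict supersolution property to exclude interior points other than $x$. This is cleaner than worrying about comparison on a domain whose boundary contains an isolated interior point.
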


\begin{remark}
Compare with \cite[Lemma 2.5]{Crandall.Evans.Gariepy}.
\end{remark}

The following estimate is an easy consequence of Theorem \ref{uniform}.

\begin{corollary}
Let $\Omega$ be a bounded domain,  $2\leq n<p\leq\infty$, and  $u $ a viscosity solution of \eqref{Dir}.
Let $\alpha_p$ and $C_p$ defined by \eqref{alphap} and \eqref{C_p}. Then, for any $x\in\Omega$ we have that
\[
\frac{|u(y)-u(x)|}{|y-x|^{1-\alpha_p}}
\le \frac{2\|u\|_{L^\infty(\Omega)}}{{\rm dist}(x,\partial\Omega)^{1-\alpha_p}}
+\frac{C_p}{1-\alpha_p}\;{\rm diam}(\Omega)^{1+\alpha_p} \,\|f\|_{L^\infty(\Omega)} 
\]
for every $y \in \overline \Omega$.
\end{corollary}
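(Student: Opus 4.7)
The plan is to derive the corollary directly from Theorem \ref{uniform} by controlling the boundary term
\[
\sup_{z\in\partial\Omega}\frac{|u(z)-u(x)|}{|z-x|^{1-\alpha_p}}
\]
using only $\|u\|_{L^\infty(\Omega)}$ and ${\rm dist}(x,\partial\Omega)$. Since the second summand in Theorem \ref{uniform} already has the required form, no extra work is needed for it.

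First I would bound the numerator trivially by the triangle inequality: for any $z\in\partial\Omega$,
\[
|u(z)-u(x)|\leq |u(z)|+|u(x)|\leq 2\|u\|_{L^\infty(\Omega)}.
\]
Next I would bound the denominator from below. Since $z\in\partial\Omega$ and $x\in\Omega$, we have $|z-x|\geq {\rm dist}(x,\partial\Omega)$. The exponent $1-\alpha_p$ is strictly positive: indeed, for $n<p<\infty$, $\alpha_p=(n-1)/(p-1)<1$, and for $p=\infty$, $\alpha_p=0$. Therefore raising to the power $1-\alpha_p$ preserves the inequality and gives
\[
|z-x|^{1-\alpha_p}\geq {\rm dist}(x,\partial\Omega)^{1-\alpha_p}.
\]

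Combining these two estimates yields
\[
\sup_{z\in\partial\Omega}\frac{|u(z)-u(x)|}{|z-x|^{1-\alpha_p}}\leq \frac{2\|u\|_{L^\infty(\Omega)}}{{\rm dist}(x,\partial\Omega)^{1-\alpha_p}},
\]
and substituting into the conclusion of Theorem \ref{uniform} gives the desired inequality. There is no real obstacle here; the only point that must be checked is the strict positivity of $1-\alpha_p$, which follows immediately from the assumption $n<p\leq\infty$.
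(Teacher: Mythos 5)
Your argument is correct, and it is exactly the "easy consequence" the paper has in mind (the paper states the corollary without proof): bound the numerator of the boundary term by $2\|u\|_{L^\infty(\Omega)}$, the denominator from below by ${\rm dist}(x,\partial\Omega)^{1-\alpha_p}$ using $1-\alpha_p>0$, and substitute into Theorem \ref{uniform}. Nothing to add.
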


We also have global estimates for the Dirichlet problem.
\begin{theorem}\label{uniform2}
Let $\Omega$ be a bounded domain,  $2\leq n<p\leq\infty$, and  $u $ a viscosity solution of
\begin{equation}\label{Dirichlet}
\begin{cases}
-\Delta_p^{N} u=f &\textrm{in $\Omega$}\\
u=g &\textrm{on $\partial \Omega$}
\end{cases}
\end{equation}
with $g\in\mathcal{C}^{0,1-\alpha_p}(\partial\Omega)$ and $\alpha_p$ as in \eqref{alphap}. Then, for every $x,y \in \overline \Omega$,
\begin{equation}\label{modulus.continuity}
\frac{|u(x)-u(y)|}{|x-y|^{1-\alpha_p}}
\le
C_p \,
 \|f\|_{L^\infty(\Omega)}\,{\rm diam}(\Omega)^{1+\alpha_p}+[g]_{1-\alpha_p,\partial\Omega}
\end{equation}
with $C_p$ as in \eqref{C_p}.
\end{theorem}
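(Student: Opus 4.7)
The plan is to prove Theorem \ref{uniform2} by a direct comparison argument with polar polynomial barriers, in the spirit of the cone comparison of Crandall-Evans-Gariepy and the polar quadratic polynomials of Armstrong-Smart. The starting point is a radial computation. For $r=|x-x_0|$ and $v=\phi(r)$ one has
\[
\Delta_p^N v = \frac{p-1}{p}\,\phi''(r) + \frac{n-1}{p}\,\frac{\phi'(r)}{r},
\]
from which $\Delta_p^N(r^{1-\alpha_p})=0$ (this is the defining property of $\alpha_p$) and $\Delta_p^N(r^2)=2(p+n-2)/p=2/C_p$. Consequently, for every $A\geq 0$ the polar polynomial
\[
V_{x_0,A}(x) := A\,|x-x_0|^{1-\alpha_p} - \frac{C_p\,\|f\|_{L^\infty(\Omega)}}{2}\,|x-x_0|^2
\]
satisfies $-\Delta_p^N V_{x_0,A} = \|f\|_{L^\infty(\Omega)}\geq f$ classically on $\Omega\setminus\{x_0\}$, while the cusp at $x_0$ makes the viscosity supersolution condition there trivially satisfied.

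First I would fix a boundary point $x_0\in\partial\Omega$ and set
\[
A := [g]_{1-\alpha_p,\partial\Omega} + \frac{C_p\,\|f\|_{L^\infty(\Omega)}}{2}\,{\rm diam}(\Omega)^{1+\alpha_p}.
\]
After dividing by $|z-x_0|^{1-\alpha_p}$, this choice is exactly what is needed to ensure $u(x_0)+V_{x_0,A}(z)\geq g(z) = u(z)$ for every $z\in\partial\Omega$. Viscosity comparison applied to the subsolution $u$ and the supersolution $u(x_0)+V_{x_0,A}$ then yields $u(x)-g(x_0)\leq A|x-x_0|^{1-\alpha_p}$ on $\overline{\Omega}$; combining with the analogous lower barrier produces the two-sided estimate
\[
|u(x)-g(x_0)|\leq A\,|x-x_0|^{1-\alpha_p}, \qquad x\in\overline{\Omega},\ x_0\in\partial\Omega.
\]

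In the second step I would take an interior base point $y\in\Omega$ and repeat the argument with a new constant $A'$. The boundary condition $u(y)+V_{y,A'}(z)\geq g(z)$ reads
\[
A'|z-y|^{1-\alpha_p}\geq g(z)-u(y)+\frac{C_p\,\|f\|_{L^\infty(\Omega)}}{2}\,|z-y|^2,\qquad z\in\partial\Omega,
\]
and the estimate from the first step, applied with $x=y$ and $x_0=z$, provides $g(z)-u(y)\leq A|z-y|^{1-\alpha_p}$. Hence the choice
\[
A':=A+\frac{C_p\,\|f\|_{L^\infty(\Omega)}}{2}\,{\rm diam}(\Omega)^{1+\alpha_p} = [g]_{1-\alpha_p,\partial\Omega} + C_p\,\|f\|_{L^\infty(\Omega)}\,{\rm diam}(\Omega)^{1+\alpha_p}
\]
suffices, and one more application of comparison produces the bound \eqref{modulus.continuity} for every pair $x,y\in\overline{\Omega}$. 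The main technical point will be the viscosity comparison principle for $-\Delta_p^N v = f$ without a sign condition on $f$, which is delicate; it can be handled by a small perturbation of $V_{x_0,A}$ (slightly increasing the coefficient of the quadratic term to produce a strict supersolution and then letting the perturbation vanish), or equivalently by replacing $u$ with its sup- and inf-convolutions as in the proof of Theorem \ref{ABP.inflapnorm.rigorous}. A short additional verification, using the sign of $\phi''$ on the unique critical sphere $\{\nabla V_{x_0,A}=0\}$, shows that the viscosity supersolution condition holds there with equality, so no further care is needed at the singular set.
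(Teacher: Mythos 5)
Your barrier $V_{x_0,A}$ and the radial computation $\Delta_p^N(r^{1-\alpha_p})=0$, $\Delta_p^N(r^2)=2/C_p$ are exactly the content of the paper's Lemma \ref{fs}, and the two-step comparison scheme (first from a boundary base point, then from an interior one) is also the same as the paper's. The difference, which turns out to be a genuine gap, lies in your choice of $A$ for the first comparison and in your treatment of the critical sphere $\{\nabla V_{x_0,A}=0\}$.

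Your first-step constant $A=[g]_{1-\alpha_p,\partial\Omega}+\tfrac{C_p\|f\|_{L^\infty}}{2}\,\mathrm{diam}(\Omega)^{1+\alpha_p}$ is exactly the minimum needed for the boundary inequality, but with this $A$ the critical sphere has radius $r_0=(A(1-\alpha_p)/B)^{1/(1+\alpha_p)}$ (with $B=C_p\|f\|_{L^\infty}$), which for small $[g]_{1-\alpha_p,\partial\Omega}$ satisfies $r_0<\mathrm{diam}(\Omega)$ and so can lie inside $\overline\Omega$. You then claim that no care is needed at this sphere because ``the viscosity supersolution condition holds there with equality.'' This is the wrong direction: the comparison argument tests the \emph{subsolution} $u$ against the $\mathcal C^2$ barrier at a point $\tilde x$ where $u-(u(x_0)+V_{x_0,A})$ has an interior maximum. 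If $\nabla V_{x_0,A}(\tilde x)=0$, the relevant condition (see Definition \ref{def.visc.plapnorm}) involves the \emph{largest} eigenvalue of $D^2V_{x_0,A}(\tilde x)$. Since $D^2V_{x_0,A}(\tilde x)=\phi''(r_0)\,\nu\otimes\nu$ with $\phi''(r_0)=-(1+\alpha_p)B<0$, that largest eigenvalue is $0$, and hence
\[
\mathcal F_*\big(\nabla V_{x_0,A}(\tilde x),D^2V_{x_0,A}(\tilde x)\big)=-\frac1p\,\Delta V_{x_0,A}(\tilde x)=\frac{(1+\alpha_p)B}{p}=\frac{\|f\|_{L^\infty(\Omega)}}{p-1}
\]
for $n<p<\infty$ (and simply $0$ for $p=\infty$). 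For $p>2$ this is strictly smaller than $\|f\|_{L^\infty(\Omega)}$, so the subsolution inequality $\mathcal F_*\le f(\tilde x)$ produces no contradiction, and the small perturbation $B\mapsto(1+\varepsilon)B$ you suggest does not repair this, as the shortfall is by a multiplicative factor $1/(p-1)$, not by $O(\varepsilon)$.

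The paper avoids the issue by requiring $A>B\,\mathrm{diam}(\Omega)^{1+\alpha_p}$ from the start (Lemma \ref{gradient.non.zero}), which pushes the critical sphere outside of $\overline\Omega$, and it still obtains the advertised constant: the trick is to \emph{retain} the quadratic term in the conclusion of the first comparison, i.e. to prove $u(x)\geq u(x_0)-A|x-x_0|^{1-\alpha_p}+\tfrac{B}{2}|x-x_0|^2$ rather than discarding the last term. Rearranged, this is precisely the boundary inequality needed in the second step with the \emph{same} $A$, so no constant accumulates between the two steps. You instead drop the quadratic correction after step one and pay an extra $\tfrac{B}{2}\mathrm{diam}(\Omega)^{1+\alpha_p}$ in step two; the final $A'$ happens to match, but the intermediate $A$ is too small for the comparison lemma to apply. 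Once you take $A\ge [g]_{1-\alpha_p,\partial\Omega}+B\,\mathrm{diam}(\Omega)^{1+\alpha_p}$ from the start and keep the quadratic term, your argument becomes essentially identical to the paper's Lemma \ref{comp} and Theorem \ref{uniform2}.
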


\begin{corollary}\label{C.alpha.estimates}
Let $\Omega$ be a bounded domain,  $2\leq n<p\leq\infty$, and  $u $ a viscosity solution of \eqref{Dirichlet} with $f\in\mathcal{C}(\Omega)$, $g\in\mathcal{C}^{0,1-\alpha_p}(\partial\Omega)$ and $\alpha_p,\ C_p$ as in  \eqref{alphap} and \eqref{C_p} respectively. Then,
\[
\begin{split}
&\|u\|_{\mathcal{C}^{0,1-\alpha_p}(\Omega)}=
\|u\|_{L^\infty(\Omega)}+\sup_{\overset{x,y\in\Omega}{
x\neq y}}\frac{|u(x)-u(y)|}{|x-y|^{1-\alpha_p}}\\
&\quad\leq\|g\|_{\mathcal{C}^{0,1-\alpha_p}(\partial\Omega)}+\left( 2\,\tilde C_p\,{\rm diam}(\Omega)^2+C_p \,{\rm diam}(\Omega)^{1+\alpha_p}\right)\,
 \|f\|_{L^\infty(\Omega)}
 \end{split}
\]
with
\begin{equation}\label{C.p.tilde}
\tilde C_p=
\left\{\begin{split}
&\frac{p}{p-1}\qquad\text{for}\ n<p<\infty\\
&1\hspace{43pt}\text{for}\ p=\infty.
\end{split}\right.
\end{equation}
\end{corollary}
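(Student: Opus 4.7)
The plan is to split the $\mathcal{C}^{0,1-\alpha_p}$ norm into its two constituent pieces,
\[
\|u\|_{\mathcal{C}^{0,1-\alpha_p}(\Omega)}=\|u\|_{L^\infty(\Omega)}+[u]_{1-\alpha_p,\Omega},
\]
and to handle each piece separately using results already established. For the seminorm $[u]_{1-\alpha_p,\Omega}$, Theorem \ref{uniform2} applies directly and gives
\[
[u]_{1-\alpha_p,\Omega}\le C_p\,\|f\|_{L^\infty(\Omega)}\,{\rm diam}(\Omega)^{1+\alpha_p}+[g]_{1-\alpha_p,\partial\Omega}.
\]

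For the $L^\infty$ part, I would derive the bound
\[
\|u\|_{L^\infty(\Omega)}\le \|g\|_{L^\infty(\partial\Omega)}+2\tilde C_p\,{\rm diam}(\Omega)^2\,\|f\|_{L^\infty(\Omega)}
\]
from the stable ABP estimates \eqref{ABP.est.plapnorm.sub}--\eqref{ABP.est.plapnorm.sup} (and their $p=\infty$ analogues \eqref{ABP.est.inflapnorm.sub}--\eqref{ABP.est.inflapnorm.sup}). Concretely, bounding the integrands pointwise by $\|f^\pm\|_{L^\infty(\Omega)}$ yields
\[
\big(\sup_\Omega u-\sup_{\partial\Omega}u^+\big)^2\le \frac{2p\,d^2}{p-1}\,\|f^+\|_{L^\infty(\Omega)}\bigl(\sup_\Omega u-\sup_{\partial\Omega}u^+\bigr),
\]
and an identical estimate for $u^-$ using \eqref{ABP.est.plapnorm.sup}. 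Dividing through gives
\[
\sup_\Omega u\le \sup_{\partial\Omega}u^++2\tilde C_p\,d^2\,\|f^+\|_{L^\infty(\Omega)},\quad
\sup_\Omega u^-\le \sup_{\partial\Omega}u^-+2\tilde C_p\,d^2\,\|f^-\|_{L^\infty(\Omega)},
\]
with $\tilde C_p=p/(p-1)$ for finite $p$ and $\tilde C_\infty=1$ (the latter is recovered identically from Theorem \ref{ABP.inflapnorm.rigorous}). Since $u=g$ on $\partial\Omega$, combining these two inequalities gives the required $L^\infty$ bound in terms of $\|g\|_{L^\infty(\partial\Omega)}$.

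Finally, I would add the two estimates and use $\|g\|_{L^\infty(\partial\Omega)}+[g]_{1-\alpha_p,\partial\Omega}=\|g\|_{\mathcal{C}^{0,1-\alpha_p}(\partial\Omega)}$, which yields precisely the stated inequality. The proof is essentially a bookkeeping combination of the ABP-type bound (applied with the trivial $L^\infty$ majorant of the integrand) and the modulus-of-continuity estimate of Theorem \ref{uniform2}. No real obstacle is anticipated; the only subtle point is to ensure that the constant $\tilde C_p$ defined in \eqref{C.p.tilde} matches the one emerging from the ABP inequality in both the finite-$p$ regime and the limit $p=\infty$, which it does by construction.
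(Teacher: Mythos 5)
Your proposal is correct and takes essentially the same approach as the paper: bound the H\"older seminorm by Theorem \ref{uniform2}, bound $\|u\|_{L^\infty(\Omega)}$ via the stable ABP estimates of Theorems \ref{ABP.inflapnorm.rigorous} and \ref{ABP.plapnorm.stable}, and add. Your intermediate derivation of the $L^\infty$ bound (replacing the integrand by its supremum and dividing through) is exactly the elementary step the paper leaves implicit, so this is just a more spelled-out version of the same argument.
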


\begin{proof}
From the ABP estimates \eqref{ABP.est.inflapnorm.sub} and \eqref{ABP.est.plapnorm.sub} in Theorems \ref{ABP.inflapnorm.rigorous} and \ref{ABP.plapnorm.stable}, we have
\[
\|u\|_{L^\infty(\Omega) }\leq\|g\|_{L^\infty(\partial\Omega)}+ 2\,\tilde C_p\,{\rm diam}(\Omega)^2\,
\|f\|_{L^\infty(C^+(u))}.
\]
for $\tilde C_p$ as in \eqref{C.p.tilde}.
This estimate together with \eqref{modulus.continuity} yields the result.
\end{proof}

\bigskip

We need some lemmas in the proof of Theorems \ref{uniform} and \ref{uniform2}.

\begin{lemma}\label{fs} Consider the function 
\[
v(x)=h+\frac{A}{1-\alpha_{p}}|x-y|^{1-\alpha_{p}}-\frac{B}{2}|x-y|^2
\]
with $A,B>0$, $h\in\mathbb{R}$ and $y \in \mathbb R^n$. Let $C_p$ as defined in \eqref{C_p}. Then 
\[
-\Delta_p^{N} v(x)= C_p^{-1} B
\]
for $x\ne y$ such that $\nabla v(x)\ne 0$.
\end{lemma}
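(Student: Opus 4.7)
The plan is a direct computation using that $v$ is radial around $y$. Write $v(x)=\phi(r)$ with $r=|x-y|$ and
\[
\phi(r)=h+\frac{A}{1-\alpha_p}r^{1-\alpha_p}-\frac{B}{2}r^2,
\]
so that $\phi'(r)=A r^{-\alpha_p}-B r$ and $\phi''(r)=-\alpha_p A r^{-\alpha_p-1}-B$. At points $x\neq y$ with $\nabla v(x)\neq 0$, the gradient is radial and hence the Hessian acts on $\nabla v/|\nabla v|$ as second derivative in the radial direction; thus
\[
\Delta v(x)=\phi''(r)+\frac{n-1}{r}\phi'(r),\qquad \Delta_\infty^N v(x)=\phi''(r).
\]

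Next, I would handle the case $n<p<\infty$ using the decomposition
\[
\Delta_p^N v=\frac{1}{p}\Delta v+\frac{p-2}{p}\Delta_\infty^N v=\frac{p-1}{p}\phi''(r)+\frac{n-1}{p\,r}\phi'(r).
\]
Substituting the expressions for $\phi'$ and $\phi''$ and grouping terms yields
\[
\Delta_p^N v(x)=\Bigl(\tfrac{n-1-(p-1)\alpha_p}{p}\Bigr)A\,r^{-\alpha_p-1}-\tfrac{p+n-2}{p}\,B.
\]
The definition $\alpha_p=(n-1)/(p-1)$ makes $(p-1)\alpha_p=n-1$, so the coefficient of $r^{-\alpha_p-1}$ vanishes and we are left with $\Delta_p^N v(x)=-\frac{p+n-2}{p}B=-C_p^{-1}B$, which gives the claim for finite $p$.

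For $p=\infty$ one has $\alpha_\infty=0$, so $\phi(r)=h+A r-\tfrac{B}{2}r^2$, $\phi''(r)=-B$, and $\Delta_\infty^N v(x)=\phi''(r)=-B=-C_\infty^{-1}B$, recovering the formula with $C_\infty=1$. There is no real obstacle here; the only point requiring care is justifying the radial formulas for $\Delta v$ and $\Delta_\infty^N v$ at points where $\nabla v\neq 0$ (in particular, $r>0$ and $\phi'(r)\neq 0$), which is a standard direct differentiation argument and ensures the operators are evaluated in their classical sense.
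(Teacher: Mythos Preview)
Your proof is correct and follows essentially the same route as the paper: a direct computation of $\nabla v$, $\Delta v$, and $\Delta_\infty^N v$, combined via the decomposition $\Delta_p^N=\frac{1}{p}\Delta+\frac{p-2}{p}\Delta_\infty^N$, with the choice $\alpha_p=(n-1)/(p-1)$ eliminating the singular term. The only cosmetic difference is that you phrase things in terms of the one-variable profile $\phi(r)$ rather than computing $D^2v$ explicitly, but the content is identical.
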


\begin{proof}
 For $x\ne y$ we have
 \[
 \nabla v(x)=\big(A\,|x-y|^{-1-\alpha_p}-B\big)\,(x-y)
 \]
 and
 \[
D^2 v(x)=\big(A\,|x-y|^{-1-\alpha_p}-B\big)\,Id-A\,(1+\alpha_p)\,|x-y|^{-1-\alpha_p}\frac{(x-y)\otimes(x-y)}{|x-y|^2}.
 \]
 Then,
\[
\Delta_\infty^{N} v(x)= -A\,\alpha_p\,|x-y|^{-1-\alpha_p}-B
\] 
and the proof is complete in the case $p=\infty$. If $p<\infty$,
\[
\begin{split}
\Delta v(x)={\rm trace}(D^2 v(x))=A\,(n-1-\alpha_p)\,|x-y|^{-1-\alpha_p}-nB
\end{split}
\]
and hence
\[
\begin{split}
\Delta_p^{N} v(x)&=\frac 1 p \Delta v(x)+\frac{p-2}{p}\Delta_\infty^{N} v(x)=-C_p^{-1} B.\qedhere
\end{split}
\]
\end{proof}

\begin{remark}
Notice that when $p=\infty$ this functions $v$ are polar quadratic polynomials as defined in \cite{armstrong.smart,LuWangCPDE}, that replace cones as the functions to which compare in the context of the non-homogeneous normalized infinity Laplacian.
\end{remark}

\begin{remark}
Both operators $\Delta_p^N$ and $\Delta_\infty^N$ are linear when applied to radial functions. This fact  motivates using cusps and cones (as they are $p$-Harmonic away from the vertex) in  Lemma \ref{fs} altogether with a quadratic perturbation.  
\end{remark}

\begin{lemma}\label{gradient.non.zero}
Let $\Omega$ be a bounded domain, $x_0\in\Omega$, $p> n$, 
\[
A>
B\,{\rm diam}(\Omega)^{1+\alpha_{p}},
\] 
and $B>0$. Then, the function
\[
v(x)=h+\frac{A}{1-\alpha_{p}}|x-x_0|^{1-\alpha_{p}}-\frac{B}{2}|x-x_0|^2
\]
satisfies $\nabla v(x)\neq0$ for any $x\in \overline\Omega\setminus\{x_0\}$.
\end{lemma}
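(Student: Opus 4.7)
The approach is a direct computation using the explicit gradient formula already established in the proof of Lemma \ref{fs}. Since $v$ is a function of $|x-x_0|$ only, the calculation is essentially one-dimensional.

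First I would recall from Lemma \ref{fs} that for $x\neq x_0$,
\[
\nabla v(x)=\bigl(A\,|x-x_0|^{-1-\alpha_p}-B\bigr)\,(x-x_0).
\]
Since the vector $(x-x_0)$ is nonzero whenever $x\neq x_0$, the gradient $\nabla v(x)$ vanishes at such a point if and only if the scalar factor vanishes, namely
\[
A\,|x-x_0|^{-1-\alpha_p}=B,\qquad\text{equivalently}\qquad |x-x_0|^{1+\alpha_p}=\frac{A}{B}.
\]

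Next I would exploit the diameter bound. For every $x\in\overline\Omega$ we have $|x-x_0|\leq{\rm diam}(\Omega)$, and since $1+\alpha_p>0$ (this holds in both cases $n<p<\infty$, where $\alpha_p\in(0,1)$, and $p=\infty$, where $\alpha_p=0$), we deduce
\[
|x-x_0|^{1+\alpha_p}\leq{\rm diam}(\Omega)^{1+\alpha_p}.
\]
The hypothesis $A>B\,{\rm diam}(\Omega)^{1+\alpha_p}$, together with $B>0$, yields $A/B>{\rm diam}(\Omega)^{1+\alpha_p}$. Hence the vanishing condition $|x-x_0|^{1+\alpha_p}=A/B$ cannot be satisfied for any $x\in\overline\Omega\setminus\{x_0\}$, and thus $\nabla v(x)\neq 0$ on that set.

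There is no real obstacle here since the entire argument reduces to the elementary inequality above; the only mild point to watch is that the conclusion uses $1+\alpha_p>0$ (so that the monotonicity of $t\mapsto t^{1+\alpha_p}$ on $[0,\infty)$ transfers the bound $|x-x_0|\leq{\rm diam}(\Omega)$ into the required inequality), and this holds uniformly for the whole range $n<p\leq\infty$ considered in the statement.
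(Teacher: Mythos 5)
Your argument is correct and coincides in substance with the paper's proof: both rely on the explicit gradient formula $\nabla v(x)=(A|x-x_0|^{-1-\alpha_p}-B)(x-x_0)$ and the bound $|x-x_0|\le{\rm diam}(\Omega)$ to conclude that the scalar prefactor stays strictly positive on $\overline\Omega\setminus\{x_0\}$. The only cosmetic difference is that the paper lower-bounds $|\nabla v(x)|$ directly, whereas you argue via the vanishing condition for the prefactor; these are the same computation.
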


\begin{proof}
A direct computation shows that for any $x\in \overline\Omega\setminus\{x_0\}$ we have that
\[
\begin{split}
|\nabla v(x)|&=|x-x_0|\cdot\big|A\,|x-x_0|^{-1-\alpha_{p}}-B\big|\\
&\ge |x-x_0|\cdot\big(A\,{\rm diam}(\Omega)^{-1-\alpha_{p}}-B \big)>0
\end{split}
\]
by our hypothesis on the size of $A$. 
\end{proof}

\begin{lemma}\label{fundam}
Let $\Omega$ be a bounded domain, $x_0\in\Omega$, $p> n\geq2$, 
\begin{equation}\label{defnA}
A\geq(1-\alpha_p)\sup_{z\in\partial\Omega}
\frac{|u(z)-u(x_0)|}{|z-x_0|^{1-\alpha_{p}}}+
B\;{\rm diam}(\Omega)^{1+\alpha_{p}},
\end{equation}
and $B=(1+\varepsilon)\,C_p \|f\|_{L^\infty(\Omega)}$ with $\varepsilon>0$. Suppose that $h$ is such that
\[
v(x)=h+\frac{A}{1-\alpha_{p}}|x-x_0|^{1-\alpha_{p}}-\frac{B}{2}|x-x_0|^2
\]
touches $u$ from above at some point $\tilde x\in\overline\Omega$. Then, necessarily $\tilde x\equiv x_0$ and $h=u(x_0)$. 

We arrive at the same conclusion if
 we suppose instead that $h$ is such that
\[
v(x)=h-\frac{A}{1-\alpha_{p}}|x-x_0|^{1-\alpha_{p}}+\frac{B}{2}|x-x_0|^2
\]
touches $u$ from below at some point $\tilde x\in\overline\Omega$. 
\end{lemma}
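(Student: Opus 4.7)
The plan is to rule out every candidate location for the touching point $\tilde x$ other than $x_0$, using the PDE structure at interior points and a direct comparison against the defining Hölder quotient at the boundary. Write $d=\mathrm{diam}(\Omega)$ and
\[
S=\sup_{z\in\partial\Omega}\frac{|u(z)-u(x_0)|}{|z-x_0|^{1-\alpha_p}},
\]
so that the hypothesis reads $A\geq(1-\alpha_p)S+B\,d^{1+\alpha_p}$. A preliminary remark is that Lemma \ref{gradient.non.zero} is applicable to $v$: when $S>0$ the hypothesis gives the strict inequality $A>B\,d^{1+\alpha_p}$, while the degenerate case $S=0$ can be accommodated by replacing $A$ by $A+\eta$ and sending $\eta\downarrow 0$ at the end. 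Consequently $\nabla v(x)\neq 0$ and $v$ is $\mathcal{C}^2$ on all of $\overline\Omega\setminus\{x_0\}$.

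The first step is to eliminate an interior touching point $\tilde x\in\Omega\setminus\{x_0\}$. Since $u-v$ attains a local maximum at $\tilde x$ with $v\in\mathcal{C}^2$ near $\tilde x$ and $\nabla v(\tilde x)\neq 0$, the viscosity subsolution inequality for $u$ yields $-\Delta_p^N v(\tilde x)\leq f(\tilde x)$. Lemma \ref{fs} computes the left-hand side explicitly as $-\Delta_p^N v(\tilde x)=C_p^{-1}B=(1+\varepsilon)\|f\|_{L^\infty(\Omega)}$, producing the contradiction
\[
(1+\varepsilon)\|f\|_{L^\infty(\Omega)}\leq f(\tilde x)\leq\|f\|_{L^\infty(\Omega)}.
\]

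The only remaining case is $\tilde x\in\partial\Omega$. From $u\leq v$ in $\overline\Omega$ and $u(\tilde x)=v(\tilde x)$ we infer $u(x_0)\leq v(x_0)=h$, whence
\[
u(\tilde x)-u(x_0)\geq v(\tilde x)-h=\frac{A}{1-\alpha_p}|\tilde x-x_0|^{1-\alpha_p}-\frac{B}{2}|\tilde x-x_0|^2.
\]
Combining with $u(\tilde x)-u(x_0)\leq S\,|\tilde x-x_0|^{1-\alpha_p}$ and dividing by $|\tilde x-x_0|^{1-\alpha_p}>0$ leads to
\[
A\leq(1-\alpha_p)S+\frac{1-\alpha_p}{2}\,B\,|\tilde x-x_0|^{1+\alpha_p}\leq(1-\alpha_p)S+\frac{1-\alpha_p}{2}\,B\,d^{1+\alpha_p}.
\]
Since $\alpha_p\in[0,1)$ forces $(1-\alpha_p)/2<1$, this contradicts the lower bound $A\geq(1-\alpha_p)S+B\,d^{1+\alpha_p}$, so we must have $\tilde x=x_0$ and therefore $h=v(x_0)=u(x_0)$. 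For the lower-touching statement the sign reversal $v\mapsto -v$ combined with the behavior of $\Delta_p^N$ under this reversal gives $-\Delta_p^N v(\tilde x)=-C_p^{-1}B$ at interior points with non-vanishing gradient, so the supersolution inequality $-\Delta_p^N v(\tilde x)\geq f(\tilde x)$ yields the contradiction $-(1+\varepsilon)\|f\|_{L^\infty(\Omega)}\geq -\|f\|_{L^\infty(\Omega)}$; the boundary comparison is the mirror image of the one above. The main bookkeeping obstacle is tracking the sharp factor $(1-\alpha_p)/2$ in the boundary estimate, which is exactly what makes the hypothesis on $A$ just strong enough to close both halves of the argument.
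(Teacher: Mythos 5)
Your proof is correct and follows essentially the same argument as the paper's: eliminate an interior touch point $\tilde x\neq x_0$ by evaluating $-\Delta_p^N v(\tilde x)$ via Lemma \ref{fs} and contradicting the subsolution (resp.\ supersolution) inequality, and eliminate a boundary touch point by comparing the explicit form of $v$ against the Hölder quotient $S$, exploiting that the factor $(1-\alpha_p)/2<1$ makes the hypothesis on $A$ strictly violated. You simply treat the two cases in the opposite order, and you flag the degenerate case $S=0$ (where $A>B\,d^{1+\alpha_p}$ need not be strict) with a regularization remark — a small technical point the paper leaves implicit.
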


\begin{proof}
We provide the proof in the first case as the second one follows in the same way.

\noindent1.\quad{}First we are going to prove that the contact point $\tilde x\notin\partial\Omega$. Assume for contradiction that $\tilde x\in\partial\Omega$. From the contact condition, we get that
\[
h=u(\tilde x)-\frac{A}{1-\alpha_{p}}|\tilde x-x_0|^{1-\alpha_{p}}+\frac{B}{2}|\tilde x-x_0|^2,
\]
and  $u(x_0)<v(x_0)$. This two facts together yield,
\[
u(x_0)<u(\tilde x)-\frac{A}{1-\alpha_{p}}|\tilde x-x_0|^{1-\alpha_{p}}+\frac{B}{2}|\tilde x-x_0|^2.
\]
Rearranging terms we get,
\[
A<(1-\alpha_{p})\frac{\big(u(\tilde x)-u(x_0)\big)}{|\tilde x-x_0|^{1-\alpha_{p}}}+\frac{B}{2}(1-\alpha_{p})|\tilde x-x_0|^{1+\alpha_{p}}.
\]
Notice that,
\[
\frac{B}{2}(1-\alpha_{p})|\tilde x-x_0|^{1+\alpha_{p}}<B\cdot{\rm diam}(\Omega)^{1+\alpha_{p}},
\]
so we arrive at a contradiction with the definition of $A$ and hence $\tilde x\notin\partial\Omega$.

\medskip

\noindent2.\quad{}Now, assume for contradiction that $\tilde x\neq x_0$. We have that $v$ touches $u$ from above at $\tilde x$. From the previous step we know that $\tilde x$ must be an interior point; hence from the hypothesis of contradiction we know that $v$ is $\mathcal{C}^2$ in a neighborhood of $\tilde x $. As $u$ solves \eqref{Dir} in the viscosity sense, we can use $v$ as a test function in the definition of viscosity solution and get
\[
-\Delta_p^{N} v(\tilde x)\leq f(\tilde x).
\]
On the other hand,  Lemma \ref{gradient.non.zero} implies  $\nabla v \ne 0 $ and then, from Lemma \ref{fs},
\[
(1+\varepsilon)\,\|f\|_{L^\infty(\Omega)}=  C_p^{-1} B= -\Delta_p^{N} v(\tilde x),
\]
with $\varepsilon>0$, a contradiction.
\end{proof}

We now complete the proof of Theorem \ref{uniform}.

\begin{proof}[Proof of Theorem \ref{uniform}]
Consider a cusp pointing downwards centered at $x$,
\[
v(y)=h+\frac{A}{1-\alpha_{p}}|y-x|^{1-\alpha_{p}}-\frac{B}{2}|y-x|^2
\]
with 
\[
A=(1-\alpha_p)\sup_{z\in\partial\Omega}
\frac{|u(z)-u(x)|}{|z-x|^{1-\alpha_{p}}}+
B\,{\rm diam}(\Omega)^{1+\alpha_{p}}
\]
and $B=(1+\varepsilon)\,C_p \|f\|_{L^\infty(\Omega)}$ with $\varepsilon>0$.
Lemma \ref{fundam} says that if $v$ touches the graph of $u$ from above, then the contact point is  $x$ and $h=u(x)$. We deduce,
\[
\begin{split}
u(y)&\leq v(y)=u(x)+\frac{A}{1-\alpha_{p}}|y-x|^{1-\alpha_{p}}-\frac{B}{2}|y-x|^2\\
&\leq u(x)+\frac{A}{1-\alpha_{p}}|y-x|^{1-\alpha_{p}}.
\end{split}
\]
We get,
\[
\begin{split}
\frac{u(y)-u(x)}{|y-x|^{1-\alpha_{p}}}\leq 
\sup_{z\in\partial\Omega}\,&
\frac{|u(z)-u(x)|}{|z-x|^{1-\alpha_{p}}}\\
&+
(1+\varepsilon)\,\|f\|_{L^\infty(\Omega)} 
\frac{C_p}{1-\alpha_p}
\,{\rm diam}(\Omega)^{1+\alpha_{p}}.
\end{split}
\]
As this estimate holds for any $\varepsilon>0$, we can let $\varepsilon\to0$.

On the other hand, we can do the same with a cusp pointing upwards and slide it until it touches from below. Namely, we get
\[
\begin{split}
u(y)&\geq v(y)=u(x)-\frac{A}{1-\alpha_{p}}|y-x|^{1-\alpha_{p}}+\frac{B}{2}|y-x|^2\\
&\geq u(x)-\frac{A}{1-\alpha_{p}}|y-x|^{1-\alpha_{p}}
\end{split}
\]
and then argue as before. The two estimates together yield the result.
\end{proof}

\bigskip

Now, we turn to the proof of the global estimates for the Dirichlet problem in Theorem \ref{uniform2}. A key point in the proof  is the following lemma, similar to  the comparison with cones property in \cite{Crandall.Evans.Gariepy}. 

\begin{lemma}\label{comp}
Consider $A>B\,{\rm diam}(\Omega)^{1+\alpha_{p}}$, $B=(1+\varepsilon)\,C_{p} \|f\|_{L^\infty(\Omega)}$ with $\varepsilon>0$,  and $x_0 \in \overline{\Omega}$. Let $u$ be a viscosity solution of \eqref{Dir}. Then if 
\[
u(x)\leq u(x_0)+A |x-x_0|^{1-\alpha_{p}}-\frac B 2|x-x_0|^2
\]
for all $x$ in $\partial \Omega$ the same inequality holds in the interior, that is 
\[
u(x)\leq u(x_0)+A |x-x_0|^{1-\alpha_{p}}-\frac B 2 |x-x_0|^2
\]
for all $x$ in  $\Omega$. In the same way, if 
\[
u(x)\geq u(x_0)-A |x-x_0|^{1-\alpha_{p}}+\frac B 2|x-x_0|^2
\]
for all  $x$ in $\partial \Omega$ the same inequality holds in the interior, that is 
\[
u(x)\geq  u(x_0)-A |x-x_0|^{1-\alpha_{p}}+\frac B 2 |x-x_0|^2
\]
for all  $x$ in  $\Omega$.
\end{lemma}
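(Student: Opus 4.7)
\medskip

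\noindent\textit{Proof sketch.} The plan is to use the barrier function
\[
v(x)=u(x_0)+\frac{A}{1-\alpha_p}|x-x_0|^{1-\alpha_p}-\frac{B}{2}|x-x_0|^2
\]
as a test function in the viscosity subsolution definition for $u$, and deduce the pointwise inequality in $\Omega$ via a standard comparison argument. The hypothesis $u(x)\le v(x)$ on $\partial\Omega$ together with $v(x_0)=u(x_0)$ provides the correct ordering on the ``boundary'' $\partial\Omega\cup\{x_0\}$. We rescale the factor $\tfrac{1}{1-\alpha_p}$ absorbed into $A$ exactly as in the statement so that the constants match.

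First I would record two properties of $v$ that are already established earlier in the paper. Lemma \ref{gradient.non.zero}, applied with the strict inequality $A>B\,{\rm diam}(\Omega)^{1+\alpha_p}$, gives $\nabla v(x)\neq 0$ for every $x\in\overline\Omega\setminus\{x_0\}$; in particular $v\in \mathcal{C}^2$ on this set. Lemma \ref{fs} then yields
\[
-\Delta_p^N v(x)=C_p^{-1}B=(1+\varepsilon)\|f\|_{L^\infty(\Omega)}>\|f\|_{L^\infty(\Omega)}\ge f(x)
\]
for every such $x$, so that $v$ is a \emph{strict} classical supersolution of $-\Delta_p^N v=f$ on $\overline\Omega\setminus\{x_0\}$.

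Next I would argue by contradiction, supposing that $\max_{\overline\Omega}(u-v)>0$. Since $u\le v$ on $\partial\Omega$, the maximum is achieved at some interior point $\tilde x\in\Omega$. Because $v(x_0)=u(x_0)$, we have $(u-v)(x_0)=0$, so the assumed strict positivity forces $\tilde x\neq x_0$. Consequently $v$ is $\mathcal{C}^2$ near $\tilde x$ with $\nabla v(\tilde x)\neq 0$, and $u-v$ attains a local maximum at $\tilde x$. Using $v$ (translated vertically by the constant $(u-v)(\tilde x)$) as a test function in the viscosity subsolution inequality for $u$ gives $-\Delta_p^N v(\tilde x)\le f(\tilde x)$, contradicting the strict supersolution inequality above. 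Hence $u\le v$ throughout $\Omega$, which is precisely the first claim, since $\tfrac{A}{1-\alpha_p}$ has been absorbed into the constant in the statement.

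For the second inequality I would run the symmetric argument with
\[
w(x)=u(x_0)-\frac{A}{1-\alpha_p}|x-x_0|^{1-\alpha_p}+\frac{B}{2}|x-x_0|^2,
\]
for which Lemma \ref{fs} gives $-\Delta_p^N w(x)=-C_p^{-1}B<-\|f\|_{L^\infty(\Omega)}\le f(x)$ on $\overline\Omega\setminus\{x_0\}$, and use the viscosity supersolution property of $u$ at any interior minimum of $u-w$. The only technical subtlety worth flagging is the singularity of $v$ (and $w$) at $x_0$ when $\alpha_p>0$ or the lack of $\mathcal{C}^2$ regularity at $x_0$ when $p=\infty$; but this never enters the viscosity comparison because the argument above rules out $\tilde x=x_0$ as a contact point. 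All the work has effectively been done in Lemmas \ref{fs} and \ref{gradient.non.zero}, so I expect no substantive obstacle beyond keeping the sign conventions and the absorbed factor $1/(1-\alpha_p)$ straight.
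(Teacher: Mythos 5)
Your argument is correct and coincides with the paper's own proof: both assume for contradiction that $\max_{\overline\Omega}(u-v)>0$, observe that the boundary hypothesis forces the maximizer $\tilde x$ to be interior, note that $(u-v)(x_0)=0$ rules out $\tilde x=x_0$ so that $v$ is $\mathcal C^2$ near $\tilde x$ with $\nabla v(\tilde x)\neq 0$, and then feed $v$ into the viscosity subsolution inequality to contradict the strict supersolution identity of Lemma \ref{fs}. The only caveat, which is already present in the paper's own writing, concerns the factor $\tfrac{1}{1-\alpha_p}$: the barrier in the statement of Lemma \ref{comp} carries $A|x-x_0|^{1-\alpha_p}$, whereas Lemmas \ref{fs} and \ref{gradient.non.zero} are phrased with $\tfrac{A}{1-\alpha_p}|x-x_0|^{1-\alpha_p}$, so after "absorbing" the factor the gradient non-vanishing criterion that Lemma \ref{gradient.non.zero} actually delivers is $A(1-\alpha_p)>B\,{\rm diam}(\Omega)^{1+\alpha_p}$ rather than the stated $A>B\,{\rm diam}(\Omega)^{1+\alpha_p}$; this notational mismatch is harmless in the downstream use in Theorem \ref{uniform2} (one just enlarges $A$), but the rescaling you describe does not literally make the constants agree, so be explicit about the renamed constant if you write this up.
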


\begin{proof}We prove just the first claim since the other one is analogous.
Denote as usual
\[
v(x)= u(x_0)+A |x-x_0|^{1-\alpha_{p}}-\frac B 2|x-x_0|^2.
\]
As we intend to prove that $u(x)-v(x)\leq0$ for all $x\in\Omega$, assume for the sake of contradiction that $u(\tilde x)-v(\tilde x)=\max_{\Omega}(u-v)>0$ as we would be done otherwise. Since $u-v\leq0$ on $\partial\Omega$ by hypothesis, $\tilde x$ must be an interior point. Moreover, $u(\tilde x)-v(\tilde x)>0$ so $\tilde x \ne x_0$ and $v$ is $\mathcal{C}^2$ in a neighborhood of $\tilde x $. As $u$ is a viscosity solution of \eqref{Dir}, we have by definition that
\[
-\Delta_p^{N} v(\tilde x)\leq f(\tilde x)
\]
On the other hand, as $\nabla v \ne 0 $ by the hypothesis on the size of $A$ (see Lemma \ref{gradient.non.zero}) we have by Lemma \ref{fs} that
\[
-\Delta_p^{N} v(\tilde x)= C_p^{-1} B=(1+\varepsilon)\,\|f\|_{L^\infty(\Omega)}
\]
a contradiction, as $\varepsilon>0$.
\end{proof}

\begin{proof}[Proof of Theorem \ref{uniform2}]
Pick $x_0 \in \partial \Omega$.
Since $g \in\mathcal{C}^{0,1-\alpha_p}(\partial\Omega)$,  we have that for any $x\in \partial \Omega $
\[
u(x)=g(x)\le u(x_0)+L\,|x-x_0|^{1-\alpha_{p}}
\] 
for $L=[g]_{1-\alpha_p,\partial\Omega}$.
Let
\begin{equation}\label{ledzeppelin}
A\geq L+B\,{\rm diam}(\Omega)^{1+\alpha_{p}}
\end{equation}
and $B=(1+\varepsilon)\,C_{p} \|f\|_{L^\infty(\Omega)}$  with $\varepsilon>0$.

 It is easy to see that \eqref{ledzeppelin} implies,
\[
u(x)\le u(x_0)+L\,|x-x_0|^{1-\alpha_{p}}\le u(x_0)+A|x-x_0|^{1-\alpha_{p}}-\frac B 2 |x-x_0|^2
\] 
for any $x\in \partial \Omega$. Then,  Lemma \ref{comp} implies that the same holds  for any $x \in \Omega$.

With the same proof it also holds that 
\begin{equation}\label{Doors.Rockmebaby}
u(x)\ge u(x_0)-A|x-x_0|^{1-\alpha_{p}}+ \frac B 2 |x-x_0|^2
\end{equation}
for any $x \in \Omega$ and $x_0 \in \partial \Omega$.

Choose now $y \in \Omega$, by \eqref{Doors.Rockmebaby} we know that for any $x \in \partial \Omega$ we have 
\[
u(x)\le u(y)+A|x-y|^{1-\alpha_{p}}-\frac B 2 |x-y|^2
\]
and thanks  to Lemma \ref{comp} the same holds also for any $x \in \Omega$.
Reversing the role of $x$ and $y$ we obtain
\[
|u(x)-u(y)|\le A|x-y|^{1-\alpha_{p}}-\frac B 2 |x-y|^2\le A|x-y|^{1-\alpha_{p}}
\]
and the result follows.
\end{proof}

\medskip

\section{Limit equation}\label{section.limits}

\begin{theorem}\label{proposition.limit.eq}
Let $\Omega$ be a bounded domain,  $2\leq n<p\leq\infty$, and $u_{p}$ a viscosity solution of
\begin{equation}\label{p.finite}
\left\{
\begin{split}
-&\Delta_{p}^{N}u_{p}(x)=f_p(x)\quad\text{in}\ \Omega\\
&u_{p}(x)=g_p(x) \quad\text{on}\ \partial\Omega,
\end{split}
\right.
\end{equation}
with $f_p\in\mathcal{C}(\Omega)$ and $g_p\in\mathcal{C}^{0,\frac{p-n}{p-1}}(\partial\Omega)$. Suppose that  $f_p$ and $g_p$  converge uniformly to some $f\in\mathcal{C}(\Omega)$ and $g$ (which in turn is $\mathcal{C}^{0,1}(\partial\Omega)$). Then, there exists a subsequence
 $u_{p'}$  that converge uniformly to some $u$,  a viscosity solution of
\begin{equation}\label{limit}
\left\{
\begin{split}
-&\Delta_{\infty}^{N}u(x)=f(x)\quad\text{in}\ \Omega\\
&u(x)=g(x) \quad\text{on}\ \partial\Omega.
\end{split}
\right.
\end{equation}
Moreover, $u\in\mathcal{C}^{0,1}(\Omega)$.
\end{theorem}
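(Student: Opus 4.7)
The plan is to use the stable estimates of Section~\ref{section.modulus.cont} to extract a convergent subsequence, and then to pass to the limit in the viscosity formulation by the standard stability argument. The whole difficulty is concentrated in the degeneracy of $\Delta_\infty^N$ at points where the gradient of the test function vanishes.

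First I would collect uniform a priori bounds. Since $g_p\to g$ uniformly with $g\in\mathcal{C}^{0,1}(\partial\Omega)$, the seminorms $[g_p]_{1-\alpha_p,\partial\Omega}$ can be uniformly bounded in terms of $[g]_{1,\partial\Omega}$ and $d$, and $\|f_p\|_{L^\infty}$ is bounded as well. Because the constants $C_p$ and $\tilde C_p$ from \eqref{C_p} and \eqref{C.p.tilde} tend to $1$ as $p\to\infty$, Corollary~\ref{C.alpha.estimates} produces a constant $M$ independent of $p$ such that
\[
\|u_p\|_{L^\infty(\overline\Omega)}+[u_p]_{1-\alpha_p,\Omega}\le M.
\]
Since $\alpha_p\to 0$, for any fixed $\beta\in(0,1)$ one has $1-\alpha_p\ge\beta$ eventually, so $\{u_p\}$ is uniformly equicontinuous and bounded. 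Arzel\`a--Ascoli then provides a subsequence $u_{p'}\to u$ uniformly in $\overline\Omega$. Letting $p'\to\infty$ in the pointwise bound $|u_{p'}(x)-u_{p'}(y)|\le M|x-y|^{1-\alpha_{p'}}$ together with $\alpha_{p'}\to 0$ yields $u\in\mathcal{C}^{0,1}(\overline\Omega)$, while uniform convergence of $u_{p'}=g_{p'}$ on $\partial\Omega$ gives $u=g$ on $\partial\Omega$.

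It remains to show that $u$ solves $-\Delta_\infty^N u=f$ in the viscosity sense. Let $\varphi\in\mathcal{C}^2$ touch $u$ strictly from above at some $x_0\in\Omega$ (strictness being obtainable by adding $\varepsilon|x-x_0|^4$). By uniform convergence $u_{p'}-\varphi$ attains a local maximum at some $x_{p'}\to x_0$, and Definition~\ref{def.visc.plapnorm} applied at $x_{p'}$ gives
\[
-\tfrac{1}{p'}\Delta\varphi(x_{p'})-\tfrac{p'-2}{p'}Q_{p'}\le f_{p'}(x_{p'}),
\]
where $Q_{p'}=\Delta_\infty^N\varphi(x_{p'})$ when $\nabla\varphi(x_{p'})\neq 0$ and $Q_{p'}=M(D^2\varphi(x_{p'}))$ otherwise. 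If $\nabla\varphi(x_0)\neq 0$, then eventually $\nabla\varphi(x_{p'})\neq 0$, the term $\Delta\varphi(x_{p'})/p'$ is negligible, and continuity yields $-\Delta_\infty^N\varphi(x_0)\le f(x_0)$ as required.

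The main obstacle is the degenerate case $\nabla\varphi(x_0)=0$, where $\Delta_\infty^N$ is discontinuous. On any subsequence along which $\nabla\varphi(x_{p'})=0$, the second alternative of Definition~\ref{def.visc.plapnorm} passes to the limit directly and yields $-M(D^2\varphi(x_0))\le f(x_0)$. On any subsequence where $\nabla\varphi(x_{p'})\neq 0$, I would extract a further subsequence so that $\hat n_{p'}:=\nabla\varphi(x_{p'})/|\nabla\varphi(x_{p'})|\to \hat n$ for some unit vector $\hat n$; then $\Delta_\infty^N\varphi(x_{p'})\to\langle D^2\varphi(x_0)\hat n,\hat n\rangle$, giving in the limit $-\langle D^2\varphi(x_0)\hat n,\hat n\rangle\le f(x_0)$, whence $-M(D^2\varphi(x_0))\le f(x_0)$ since $\langle D^2\varphi(x_0)\hat n,\hat n\rangle\le M(D^2\varphi(x_0))$. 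This matches the subsolution condition of Definition~\ref{def.visc.inflapnorm}. The supersolution inequality follows by the symmetric argument with $M$ replaced by $m$ and all inequalities reversed, completing the proof.
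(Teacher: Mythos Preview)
Your proof is correct and follows essentially the same route as the paper's: uniform $\mathcal{C}^{0,1-\alpha_p}$ bounds from Corollary~\ref{C.alpha.estimates}, Arzel\`a--Ascoli, and the standard viscosity stability argument with the degenerate case $\nabla\varphi(x_0)=0$ handled by splitting into subsequences according to whether $\nabla\varphi(x_{p'})$ vanishes. The only cosmetic differences are that the paper fixes an auxiliary exponent $p_0$ and embeds $\mathcal{C}^{0,1-\alpha_p}\hookrightarrow\mathcal{C}^{0,1-\alpha_{p_0}}$ rather than invoking a threshold $\beta$, and that it writes out the supersolution case while you write the subsolution case; your extraction of a limiting unit vector $\hat n$ is slightly more explicit than the paper's $\liminf$ phrasing, but the content is identical.
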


As it was mentioned in the introduction, this result is related to the results in \cite{Bha-DiBe-Man}. However, a distinct feature of this limit process is the lack of variational structure of problem \eqref{p.finite} that yields complications in the proof of the uniform convergence of the solutions. Our ABP estimate provides a stable $L^\infty$ bound that can be used in combination with our stable regularity results to prove convergence.

It is also interesting to understand the limits of \eqref{p.finite} to solutions of \eqref{limit} as both equations can be interpreted in the framework of game theory, see \cite{PSSW, PSSW2}.


Once uniform estimates have been established, the proof of Theorem \ref{proposition.limit.eq} is rather standard, but we include it for the sake of completeness.

\begin{proof}
Fix $p_0$ such that $n<p_0$. Then, for $p\geq p_0$, 
Corollary \ref{C.alpha.estimates} yields the following estimate
\begin{multline*}
\|u_p\|_{\mathcal{C}^{0,\frac{p_0-n}{p_0-1}}(\Omega)}\leq {\rm diam}(\Omega)^{\left(\frac{p-n}{p-1}-\frac{p_0-n}{p_0-1}\right)}\cdot \|u_p\|_{\mathcal{C}^{0,\frac{p-n}{p-1}}(\Omega)}\\
\leq{\rm diam}(\Omega)^{\left(\frac{p-n}{p-1}-\frac{p_0-n}{p_0-1}\right)}\cdot\|g_p\|_{\mathcal{C}^{0,\frac{p-n}{p-1}}(\partial\Omega)}
\\ +{\rm diam}(\Omega)^{2-\frac{p_0-n}{p_0-1}}\cdot\left( \frac{2\,p}{p-1}\,{\rm diam}(\Omega)^{\frac{p-n}{p-1}}+\frac{p}{p+n-2} \right)\,
 \|f_p\|_{L^\infty(\Omega)}.
 \end{multline*}
As the right hand side can be bounded independently of $p$, Arzela-Ascoli Theorem yields the existence of a subsequence converging uniformly to some limit $u\in\mathcal{C}(\Omega)$. We will still denote the subsequence by $u_p$.

Now, we turn to checking that the limit $u$ is a viscosity solution of \eqref{limit}.
Let  $x_0\in\Omega$  and a function
$\varphi\in\mathcal{C}^{2}(\Omega)$ such that ${u}-\varphi$ attains a  local  minimum at $x_0$.  Up to replacing $\varphi(x)$ with $\varphi(x)-|x-x_0|^4$, we can assume without loss of generality that minimum to be strict.

As ${u}$  is the uniform limit of the subsequence $u_{p}$ and $x_0$ is a strict minimum point, there exists a sequence of points  $x_p\rightarrow{}x_0$ as $p\to\infty$ such that
$(u_{p}-\varphi)(x_p)$ is a local minimum for each $p$ in the sequence.

Assume first that $|\nabla \varphi(x_0)|>0$. Then, $|\nabla \varphi(x_p)|>0$ for $p$ large enough and, as $u_{p}$ is a  viscosity supersolution of \eqref{p.finite}, we have that,
\begin{equation}\label{realmadrid}
\begin{split}
-\frac{1}{p}\,\textnormal{trace}\left[\left(I+(p-2)\frac{\nabla  \varphi(x_p)\otimes\nabla  \varphi(x_p)}{|\nabla  \varphi(x_p)|^2}\right)D^2 \varphi(x_p)\right]&=-\Delta_p^N \varphi(x_p)\\
&\geq f_{p}(x_p).
\end{split}
\end{equation}
Letting $p\to\infty$ we get
\[
\begin{split}
-\Big\langle D^2\varphi(x_0)\frac{\nabla \varphi(x_0)}{|\nabla \varphi(x_0)|},\frac{\nabla \varphi(x_0)}{|\nabla \varphi(x_0)|}\Big\rangle=-\Delta_\infty^N \varphi(x_0)\geq f(x_0).
\end{split}
\]

If we assume otherwise that  $\nabla \varphi(x_0)=0$, we have to consider two cases. Suppose first that there exists a subsequence still indexed by $p$ such that $|\nabla\varphi(x_p)|>0$ for all $p$ in the subsequence. Then, by  Definition \ref{def.visc.inflapnorm}, we can  let $p\to\infty$ to  get
\[
\begin{split}
-\liminf_{p\to\infty}\Big\langle D^2\varphi(x_p)\frac{\nabla \varphi(x_p)}{|\nabla \varphi(x_p)|},\frac{\nabla \varphi(x_p)}{|\nabla \varphi(x_p)|}\Big\rangle&=-m\big(D^2\varphi(x_0)\big)\\
&=-\Delta_\infty^N \varphi(x_0)\geq f(x_0).
\end{split}
\]
If such a subsequence does not exists, according to Definition \ref{def.visc.plapnorm}, we have that
\[
-\frac1p\,\Delta\varphi(x_p)-\frac{(p-2)}{p}\,m\big(D^2\varphi(x_p)\big)=-\Delta_p^N\varphi(x_p)\geq f_p(x_p)
\]
for every $p$ large enough. Letting $p\to\infty$, we have 
\[
-m\big(D^2\varphi(x_0)\big)=-\Delta_\infty^N \varphi(x_0)\geq f(x_0).
\]
We have proved \eqref{def.supersol.inflap}, so $u$ is a viscosity supersolution of \eqref{limit}. The subsolution case is similar.

Finally, we conclude that $u\in\mathcal{C}^{0,1}(\Omega)$ either letting $p_0\to\infty$ or using  that $u$ is a solution of \eqref{limit}, and hence the estimates in Corollary  \ref{C.alpha.estimates} apply.
\end{proof}

\medskip

\section{Examples}\label{section.examples}

In this section we provide some examples. The first one shows that the classical ABP estimate in the form
\[
\sup_{\Omega} u \le \sup_{\partial \Omega} u+ C(n,\Omega)\|\Delta_\infty^N u\|_{L^n(\Omega)}
\]
fails to hold. In order to prove this we shall construct a sequence of functions $\{u_\varepsilon\}$ defined on $B_1(0)$, vanishing on the boundary such that 
\[
\sup_{B_1(0)} u_\varepsilon (x)\approx 1 \quad \text{and} \quad \|\Delta _\infty^N u_{\varepsilon}\|_{L^n(B_
1(0))} \to 0,
\]
as $\varepsilon\to 0$. To do this we define 
\[
u_\varepsilon (x) =v_\varepsilon (|x|)=\frac{1}{1+\varepsilon}\,\big(1-\ |x|^{1+\varepsilon}\big),
\] 
so that, 
\[
\sup_{B_1(0)} u_\varepsilon=\frac{1}{1+\varepsilon}
\]
and 
\begin{equation}\label{lapsbagliato}
-\Delta_\infty^N u_\varepsilon(x)=-v''_\varepsilon(|x|)=\frac{\varepsilon}{|x|^{1-\varepsilon}}.
\end{equation}
Now a straightforward computation gives
\[
\|\Delta^N_\infty u_\varepsilon \|^n_{L^n(B_1(0))}=c(n) \varepsilon^n \int_0^1 \frac{d\rho}{\rho^{1-n\varepsilon}}=c(n)\varepsilon^{n-1}.
\]

Notice however that  equation \eqref{lapsbagliato} is a pointwise computation and one has to give it a meaning in the viscosity sense. However this difficulty can  be easily overcome considering, for instance, the functions
\[
u_{\varepsilon, \delta}(|x|)=\frac{(1+\delta^2)^{\frac{1+\varepsilon}{2}}}{1+\varepsilon}-\frac{\ (|x|^2+\delta^2)^{\frac{1+\varepsilon}{2}}}{1+\varepsilon} 
\]
which  are $\mathcal{C}^2$, details are left to the reader.

As we said in the introduction we are not able to provide a counterexample to the validity of an estimate of the form
\begin{equation}\label{ABPP}
\sup_{\Omega} u \le \sup_{\partial \Omega} u + C(n,p,\Omega)\,\|\Delta_\infty^N u\|_{L^p(\Omega)}
\end{equation}
for $p>n$. Our strategy, which is to approximate the ``infinity harmonic" cone $1-|x|$ with radial functions, cannot  be used to obtain such a counterexample. In fact, it is easy to see that if $v_\varepsilon(|x|)$ uniformly converge to $1-|x|$, then the sequence of unidimensional functions $v''_\varepsilon(\rho)$  has to converge 
 in the sense of distributions to $-2\delta_0$, twice a Dirac mass in the origin. Hence to find a counterexample to \eqref{ABPP} with our strategy one essentially needs to construct a sequence $v_\varepsilon(\rho)$ such that 
\[
 \int_0^1 v''_\varepsilon(\rho)\,d\rho \to 1 \quad \text{and} \quad \int_0^1 (v''_\varepsilon(\rho))^p \rho^{n-1}\,d\rho \to 0.
\]
An application of H\"older inequality gives that this is impossible  for $p>n$. 

The second example we provide shows that our estimate is, in some sense, sharper than the plain $L^\infty$ bound
\begin{equation}\label{ABPINF}
\sup_{\Omega} u \le \sup_{\partial \Omega} u + c(n,\Omega)\|\Delta_\infty^N u\|_{L^\infty (\Omega)}.
\end{equation}
Let us consider the functions 
\[
w_\varepsilon(x)=
\begin{cases}
1-|x| \qquad &\text{if $\varepsilon< |x|\le 1$}\\
1-\frac{\varepsilon}{2}-\frac{|x|^2}{2\varepsilon.} &\text{if $|x|\le \varepsilon$}.
\end{cases}
\]
One immediately sees that
\[
\sup_{B_1(0)} w_\varepsilon\approx 1\quad \text{and} \quad \|\Delta_\infty^N w_\varepsilon\|_{L^\infty (B_1(0))}=\frac{1}{\varepsilon},
\]
while
\[
\int_0^{\sup_{B_1(0)} w_\varepsilon} \|\Delta_\infty^N w_\varepsilon\|_{L^\infty(\{w_\varepsilon=r\})} dr\approx 1,
\]
so our estimate can give more information than \eqref{ABPINF}.

\appendix

\section{}

For the sake of completeness we provide here the proof of some technical details needed in the foregoing.

\begin{lemma}\label{lemma.cici}
Let $\Omega^*$ be a convex domain and $u$  a continuous semiconvex function  such that $u\leq0$ in $\mathbb{R}^n\setminus\Omega^*$ and $u(x_0)>0$ for some $x_0 \in \Omega^*$. Let $\Gamma_\sigma (u^+)$ be the concave envelope of $u^+$ (extended by 0 outside $\Omega^*$) in 
\[
\Omega^*_\sigma=\{x\in\mathbb{R}^n  \textrm{ such that } {\rm dist}(x,\Omega^*)\le\sigma\}.
\]
Then $\Gamma_\sigma(u)$ is $\mathcal C_{\rm loc}^{1,1}$ in $\Omega_\sigma^*$ (and hence second order differentiable a.e.). Moreover,
\[
\{x\in \Omega^*:\ \det D^2 \Gamma_\sigma (u)\neq0\}\subset C_\sigma ^{+}(u),
\]
where $C_\sigma ^+(u)$ is the set of points in $\Omega^*_\sigma$ where $u=\Gamma_\sigma(u)$.
\end{lemma}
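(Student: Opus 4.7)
The plan is to prove the two assertions separately, both built on the classical Carath\'eodory representation
\[
\Gamma_\sigma(u)(x) = \sup\Bigl\{\sum_{i=1}^{n+1}\lambda_i u^+(y_i) : y_i \in \Omega^*_\sigma,\ \lambda_i \ge 0,\ \textstyle\sum\lambda_i = 1,\ \sum\lambda_i y_i = x\Bigr\},
\]
in which the supremum is attained (by upper semicontinuity of $u^+$) and every active $y_i$ (with $\lambda_i>0$) belongs to the contact set: otherwise one could replace $u^+(y_i)$ by $\Gamma_\sigma(u)(y_i)>u^+(y_i)$ and exceed the supremum. In particular, an active $y_i$ with $u^+(y_i)>0$ belongs to the open set $\{u>0\}\subset\Omega^*$, since $u\le0$ outside $\Omega^*$ and $u$ is continuous.

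For the $\mathcal{C}^{1,1}_{\mathrm{loc}}$ regularity, I would derive a uniform paraboloid bound from below for $\Gamma_\sigma(u)$ at each interior point, to pair with the upper bound $D^2\Gamma_\sigma(u)\le0$ coming from concavity. Fix $x_0$ interior and take an optimal representation as above. At each active $y_i$ with $u^+(y_i)>0$, semiconvexity of $u$ provides $p_i\in\mathbb{R}^n$ with
\[
u(z)\ge u(y_i)+\langle p_i,z-y_i\rangle - C|z-y_i|^2 \qquad \forall z\in\mathbb{R}^n.
\]
For $y$ close to $x_0$ the translates $\tilde y_i:=y_i+(y-x_0)$ remain in $\Omega^*_\sigma$ and $\sum\lambda_i\tilde y_i=y$, whence
\[
\Gamma_\sigma(u)(y)\ge\sum\lambda_i u^+(\tilde y_i)\ge \Gamma_\sigma(u)(x_0)+\langle p^*,y-x_0\rangle - C|y-x_0|^2,
\]
using $u^+\ge u$ for the positive active $y_i$'s, $u^+\ge 0$ for the remaining terms, and setting $p^*:=\sum_{u^+(y_i)>0}\lambda_i p_i$. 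Since this paraboloid bound holds at every interior point with uniform $C$, substituting $z=tx+(1-t)y$ into the estimate at $z$ yields
\[
\Gamma_\sigma(u)(z)\le t\,\Gamma_\sigma(u)(x)+(1-t)\Gamma_\sigma(u)(y)+C\,t(1-t)|x-y|^2,
\]
i.e.\ local convexity of $\Gamma_\sigma(u)+C|\cdot|^2$. Combined with concavity this gives $-2CI\le D^2\Gamma_\sigma(u)\le 0$ distributionally, so $\Gamma_\sigma(u)\in W^{2,\infty}_{\mathrm{loc}}(\Omega^*_\sigma)=\mathcal{C}^{1,1}_{\mathrm{loc}}(\Omega^*_\sigma)$, and twice differentiability a.e.\ follows from Alexandrov's theorem.

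For the containment, suppose $x_0\in\Omega^*_\sigma\setminus C^+_\sigma(u)$ is a point of twice differentiability. Then $\Gamma_\sigma(u)(x_0)>u^+(x_0)$, so the optimal representation at $x_0$ must involve at least two distinct active $y_i\ne x_0$. Since each active $y_i$ lies in the contact set, $\Gamma_\sigma(u)(x_0)=\sum\lambda_i\Gamma_\sigma(u)(y_i)$, and concavity forces $\Gamma_\sigma(u)$ to be affine on the simplex $S=\mathrm{conv}\{y_i:\lambda_i>0\}$, a set of positive dimension containing $x_0$ in its relative interior. Hence $D^2\Gamma_\sigma(u)(x_0)$ has a zero eigenvalue in any direction tangent to $S$, which yields $\det D^2\Gamma_\sigma(u)(x_0)=0$.

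The main technical obstacle lies in the first part: the lower bound $\Gamma_\sigma(u)(y)\ge\sum\lambda_i u^+(\tilde y_i)$ requires all perturbed points $\tilde y_i$ to remain in $\Omega^*_\sigma$. Active positive $y_i$ are safely interior to $\Omega^*$ by continuity, but zero-valued active support points might lie near $\partial\Omega^*_\sigma$. This is resolved by choosing the representation with any zero-valued support points lying in the open region $\Omega^*_\sigma\setminus\overline{\Omega^*}$, which is nonempty precisely because $\sigma>0$; once the $y_i$ are taken from a compact subset of the interior, the perturbation $\tilde y_i$ stays inside $\Omega^*_\sigma$ for all $y$ in a fixed neighborhood of $x_0$, yielding the paraboloid bound with a constant uniform on compact subsets.
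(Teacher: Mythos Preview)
Your overall strategy---Carath\'eodory representation plus semiconvexity at the contact points---is the same as the paper's, and your treatment of the containment $\{\det D^2\Gamma_\sigma(u)\ne0\}\subset C^+_\sigma(u)$ is essentially identical. The difference lies in the $\mathcal{C}^{1,1}_{\mathrm{loc}}$ part, where you translate \emph{all} support points simultaneously by $y-x_0$, whereas the paper translates only a single well-chosen one.

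There is a genuine gap in your resolution of the boundary obstacle. You propose to place any zero-valued active support points in the open annulus $\Omega^*_\sigma\setminus\overline{\Omega^*}$, but this is impossible. Since $\Gamma_\sigma(u^+)$ is concave, nonnegative, and not identically zero, it is strictly positive throughout $\mathrm{int}(\Omega^*_\sigma)$. Now take any $x_0$ in the interior and any optimal representation; as you yourself note, each active $y_i$ is a contact point, so $\Gamma_\sigma(u^+)(y_i)=u^+(y_i)$. In particular a zero-valued active $y_j$ has $\Gamma_\sigma(u^+)(y_j)=0$, which forces $y_j\in\partial\Omega^*_\sigma$. (Equivalently: the supporting affine $L$ satisfies $L\ge u^+\ge0$ on $\Omega^*_\sigma$ and $L(y_j)=0$; an affine function attaining its minimum at an interior point is constant, contradicting $L(x_0)=\Gamma_\sigma(u^+)(x_0)>0$.) Hence no optimal representation can have its zero-valued support points in the interior, your translated point $\tilde y_j=y_j+(y-x_0)$ may exit $\Omega^*_\sigma$, and the inequality $\Gamma_\sigma(u)(y)\ge\sum\lambda_i u^+(\tilde y_i)$ is not available.

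The paper handles this by showing that at most one support point can lie on $\partial\Omega^*_\sigma$, and that among the interior ones there is always an index $i_0$ with $\lambda_{i_0}\ge c(n,\sigma,\Omega^*,K)>0$. It then perturbs \emph{only} $y_{i_0}$, writing $x+h=\lambda_{i_0}(y_{i_0}+h/\lambda_{i_0})+\sum_{i\ne i_0}\lambda_i y_i$, and uses concavity together with the fact that $\Gamma_\sigma(u^+)$ agrees with the supporting affine function on the whole simplex. This yields the lower paraboloid with constant $M/\lambda_{i_0}$ rather than $M$---worse than what your argument would give, but still locally uniform. Your simultaneous-translation idea would produce a cleaner constant if it worked, but the unavoidable boundary support point blocks it.
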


\begin{proof}
This lemma can be easily deduced from the arguments of \cite[Lemma 3.5]{CC}. However, since our statement is slightly different,  we shall briefly sketch the proof for the sake of completeness. Obviously the second derivative of $\Gamma_\sigma (u)$ is bounded from above by $0$ and hence to prove the $\mathcal C_{\rm loc}^{1,1}$ regularity it is enough to show they are locally  bounded from below. Since the graph of $u$ can be touched from below by a paraboloid with fixed opening this is clearly true for every point in the contact set $C_\sigma^+ (u)$.

Let now choose a compact set $\Omega^* \subset K\subset \Omega^*_\sigma$, pick  $x\in K \setminus C_\sigma ^+(u) $ and let $L$ be a supporting hyperplane at $x$ to the graph of $\Gamma_\sigma (u^+)$. Following the same argument of \cite[Lemma 3.5]{CC} one can prove that
\[
x\in {\rm conv}\{y \in  \Omega^*_\sigma \textrm{ such that } u^+(y) =L(y)\},
\]
where $\rm conv$ denotes the closed convex hull of a set.
Moreover,
 by Caratheodory's Theorem, we can  write 
\[
x=\lambda_1 x_1+\dots+\lambda_{n+1} x_{n+1}
\]
with $x_i \in \{ u^+=L\} $, $\lambda_i \ge 0$ and $\sum_{i=1}^{n+1} \lambda_i =1$. 
We conclude that $x$ belongs to the simplex ${\rm conv}\{x_1,\ldots,x_{n+1}\}$
and that
$L\equiv\Gamma_\sigma (u^+)$ in this set. Moreover, all the $x_i\in\Omega^*\cap C_\sigma^+(u)$  with the exception of at most one, that may belong  to  $\partial \Omega_\sigma^* $; otherwise by concavity one would have that $\Gamma_\sigma (u^+)=0$ everywhere in $\Omega^*_\sigma$ since $u^+=0$ in $\Omega^*_\sigma \setminus \Omega^*$.

We now claim that there exists a constant $C=C(\sigma,n,\Omega^*, K)$ and an index $i_0$ such that  $x_{i_0}\in \Omega ^*$ and $\lambda_{i_0} \ge C$, this is  true  with $C=1/(n+1)$ if all the $x_i$ belong to $\Omega^*$. Otherwise we can assume without loss of generality that $x_{n+1} \in \partial \Omega_\sigma^*$ and  suppose that $\lambda_i < C $ for $i=1,\dots,n$. Then,
\[
0<{\rm dist}(K,\partial \Omega^*_ \sigma) \le |x-x_{n+1}|\le\sum_{i=1}^{n}\lambda_i\left|x_i-x_{n+1}\right|<  n\,C( {\rm diam}(\Omega^*)+2 \sigma ),
\]    
 a contradiction if $C$ is  small enough. Assume without loss of generality that $i_0=1$, by the semiconvexity assumption we know that
\[
\Gamma_\sigma (u^+)(x_1+h)\ge u^+(x_1+h)\ge L(x_1+h) -M|h|^2
\]
for $|h|\le {\rm dist} (\Omega^*, \partial \Omega_\sigma^*)$ and $L$ an affine function whose graph is the supporting hyperplane to the graph of $\Gamma_\sigma(u^+)$ at $x_1$. Writing
\[
x+h=\lambda_1 \Big(x_1+\frac{h}{\lambda_1}\Big)+\dots+\lambda_{n+1} x_{n+1},
\]
thanks to the concavity of $\Gamma_\sigma (u^+)$ and the fact that $L\equiv\Gamma_\sigma (u^+)$ in the simplex ${\rm conv}\{x_1,\ldots,x_{n+1}\}$, we get
\[
\begin{split}
\Gamma_\sigma (u^+)(x+h)&\ge \lambda_1\Gamma_\sigma (u^+) \Big(x_1+\frac{h}{\lambda_1}\Big)\\
&\quad+\lambda_ 2 \Gamma_\sigma(u^+)(x_2)+\dots+\lambda_{n+1} \Gamma_\sigma (u^+)(x_{n+1})\\
&\ge \lambda_1 L\Big(x_1+\frac{h}{\lambda_1}\Big) -\frac {M} {\lambda_1}|h|^2+\lambda_ 2 L(x_2)+\dots+\lambda_{n+1} L(x_{n+1})\\
&=L(x+h)-\frac {M} {\lambda_1}|h|^2\ge L(x+h)-\tilde M (n,\sigma, \Omega^*,K)|h|^2
\end{split}
\]
for $|h|\le \varepsilon (n,\sigma, \Omega^*,K)$ small enough, which is the claimed $\mathcal{C}^{1,1}_{\rm loc }$ regularity. To prove the last assertion in the statement of the Lemma, just notice that 
 for any $x \in \Omega^{*} \setminus C^+_\sigma (u)$ the function $\Gamma_\sigma (u^+)$ coincides with an affine function on a segment.
\end{proof}

\begin{lemma}\label{pointwise}
Let $u$ be a viscosity subsolution of
\[
\mathcal{F}\big(\nabla u,D^{2}u\big)= f(x)\quad\text{in}\ \Omega
\]
 and let $x_0\in \Omega$. If $\varphi$ is a function such that $u-\varphi$ has a local maximum at $x_0$ and  $\varphi$ is twice differentiable at $x_0$, then
\[
\mathcal{F}_*\big(\nabla \varphi({x_{0}}),D^{2}\varphi({x_{0}})\big)\leq f({x_{0}}).
\]
\end{lemma}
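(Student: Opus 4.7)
The plan is to reduce to the standard viscosity subsolution inequality by constructing, for each $\varepsilon > 0$, a genuine $\mathcal{C}^2$ function $\psi_\varepsilon$ whose graph lies above that of $\varphi$ in a neighborhood of $x_0$, touches it at $x_0$, and whose gradient and Hessian at $x_0$ are small perturbations of those of $\varphi$. Once this is achieved, the viscosity inequality applied to $\psi_\varepsilon$ combined with the lower semicontinuity of $\mathcal{F}_*$ will yield the claim after passing to the limit $\varepsilon \to 0$.

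First I would replace $\varphi$ with $\varphi + (u(x_0)-\varphi(x_0))$, which changes neither $\nabla \varphi(x_0)$, $D^2\varphi(x_0)$, nor the property that $u-\varphi$ has a local maximum at $x_0$; this normalization gives $\varphi(x_0) = u(x_0)$ and hence $u \leq \varphi$ on some ball around $x_0$. The twice differentiability of $\varphi$ at $x_0$, as defined in Section \ref{section.prelim}, means precisely that
\[
\varphi(x) = P(x) + o(|x-x_0|^2) \quad \text{as } x \to x_0,
\]
where $P$ is the quadratic Taylor polynomial of $\varphi$ at $x_0$. For each $\varepsilon > 0$ I would then take the smooth candidate
\[
\psi_\varepsilon(x) := P(x) + \varepsilon |x-x_0|^2,
\]
which satisfies $\psi_\varepsilon(x_0) = \varphi(x_0)$, $\nabla\psi_\varepsilon(x_0) = \nabla\varphi(x_0)$, and $D^2 \psi_\varepsilon(x_0) = D^2\varphi(x_0) + 2\varepsilon I$. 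By the very definition of the little-$o$ remainder, there is $r_\varepsilon > 0$ such that $\varphi \leq \psi_\varepsilon$ on $B_{r_\varepsilon}(x_0)$.

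Combining $u \leq \varphi \leq \psi_\varepsilon$ with equality at $x_0$ shows that $u - \psi_\varepsilon$ attains a local maximum at $x_0$; since $\psi_\varepsilon \in \mathcal{C}^2(\Omega)$, the viscosity subsolution definition applied to $\psi_\varepsilon$ yields
\[
\mathcal{F}_*\bigl(\nabla\varphi(x_0),\, D^2\varphi(x_0) + 2\varepsilon I\bigr) \leq f(x_0).
\]
Finally, sending $\varepsilon \to 0^+$ and using that $\mathcal{F}_*$ is by construction lower semicontinuous on $\mathbb{R}^n \times S^n$ delivers the desired inequality. I expect the only genuine point to verify to be that the $o(|x-x_0|^2)$ remainder is absorbed by $\varepsilon |x-x_0|^2$ on a sufficiently small ball, but this is immediate from the pointwise definition of second differentiability recalled in Section \ref{section.prelim}; the rest of the argument is purely formal and, in particular, requires no regularity of $\varphi$ away from $x_0$.
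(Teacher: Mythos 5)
Your proposal is correct and takes essentially the same approach as the paper: both replace $\varphi$ by its second-order Taylor polynomial at $x_0$ plus a small quadratic perturbation $\varepsilon|x-x_0|^2$, observe that this $\mathcal{C}^2$ function still yields a local maximum of the difference with $u$ at $x_0$, invoke the viscosity subsolution inequality for it, and let $\varepsilon \to 0$ using the lower semicontinuity of $\mathcal{F}_*$. The explicit normalization $\varphi(x_0)=u(x_0)$ you perform is harmless but not needed, since the viscosity definition only requires a local maximum of $u-\varphi$ rather than actual touching.
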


\begin{proof}
By assumption
\[
\varphi(x)=\varphi(x_0)+\langle\nabla \varphi(x_0), (x-x_0)\rangle+ \frac 1 2 \langle D^2\varphi(x_0)  (x-x_0),(x-x_0)\rangle+o(|x-x_0|^2).
\]
Thus for every $\varepsilon>0$ the  $\mathcal C^2$ function
\[
\varphi_\varepsilon(x)=\varphi(x_0)+\langle\nabla \varphi(x_0), (x-x_0)\rangle+ \frac 1 2 \langle D^2\varphi(x_0)  (x-x_0),(x-x_0)\rangle+\frac{\varepsilon}{2}|x-x_0|^2.
\]
touches $u$ from above at $x_0$, so by definition of viscosity subsolution we have
\[
\mathcal{F}_*\big(\nabla \varphi({x_{0}}),D^{2}\varphi({x_{0}})+\varepsilon{\rm Id}\big)\leq f({x_{0}})
\]
and passing to the limit in $\varepsilon$ we obtain the thesis.
\end{proof}

Before stating the next  Proposition, we recall that a Lipschitz function $v: M \to N$  between two $\mathcal{C}^1$ manifolds is differentiable $\mathcal{H}^{n-1}$-a.e. and we shall denote with $\nabla^M v$  its tangential gradient, that is, the linear operator between $T_x M$ and $T_x N$ defined by
\[
\nabla^M v\cdot \xi:=\lim_{t\to0}\frac{f(x+t\xi)-f(x)}{t},
\]
see for instance \cite[Section 3.1]{Federer}.

\begin{proposition} \label{proposition.hardcore}
Let $w$ be a concave  $\mathcal{C}^{1,1}$  function in a ball $ B_{R}(0) $ with $w=0$ on $\partial  B_{R}(0) $. Then,
\begin{enumerate}

\item For every level $r\in(0,\sup_{ B_{R}(0) }w)$, $\nabla w\neq0$ and $M_r:=\{w=r\}$ is a $\mathcal{C}^{1,1}$ manifold.

\item For a.e. $r\in[0,\sup_{ B_{R}(0) }w]$, $\mathcal{H}^{n-1}$-a.e. $x\in M_r$ is a point of twice differentiability for $w$.

\item Let $r\in(0,\sup_{ B_{R}(0) } w)$, if $x\in M_r$ is a point of twice differentiability, then,
\begin{equation}\label{tanghessian}
\begin{split}
-D^2w (x) =&|\nabla w(x)|\,\sum_{i=1}^{n-1} \kappa_i(x)\, \tau_i(x)\otimes \tau_i(x)\\
&-\sum_{i=1}^{n-1}\partial_{\nu\tau_{i}}^2w(x)\, \big(\tau_i(x)\otimes\nu(x)+\nu(x)\otimes\tau_i(x)\big)\\
&-\Delta_\infty^N w(x)\,  \nu(x) \otimes \nu(x),
\end{split}
\end{equation}
where:
\begin{enumerate}
\item $\nu(x)$ is the exterior normal to $M_r$ at $x$, i.e.
\[
\nu(x)=-\frac{\nabla w(x)}{|\nabla w(x)|}.
\]
\item $\tau_i(x)$ for $i=1,\ldots, n-1$ is an orthonormal basis of $T_xM_r$ that diagonalizes the Weingarten operator $\nabla^M\nu$ at $x$.
\item $\kappa_i(x)$ are the eigenvalues of $\nabla^M \nu$ at $x$, which are the principal curvatures of $M_r$ at $x$.
\end{enumerate}
\end{enumerate}
\end{proposition}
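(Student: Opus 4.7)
My plan is to address the three claims in order.

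For (1), I would exploit concavity: since $w$ is concave on $B_R(0)$, any interior critical point of $w$ is automatically a global maximum, so $\nabla w\neq 0$ at every point of $\{w<\sup_{B_R(0)} w\}$; in particular on $M_r$ for any $r\in(0,\sup_{B_R(0)} w)$. Coupled with the $C^{1,1}$ regularity, the implicit function theorem (noting that a function implicitly defined from a $C^{1,1}$ map with non-vanishing partial derivative is itself $C^{1,1}$) yields that each $M_r$ is locally the graph of a $C^{1,1}$ function, hence a $C^{1,1}$ manifold.

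For (2), since $w\in C^{1,1}$ its gradient is Lipschitz, so Rademacher's theorem produces a set of full Lebesgue measure on which $\nabla w$ is differentiable, i.e., on which $w$ is twice differentiable in the pointwise sense. Letting $E\subset B_R(0)$ denote the Lebesgue-null exceptional set, the coarea formula applied to the Lipschitz function $w$ yields
\[
0 = \int_E |\nabla w|\,dx = \int_0^{\sup_{B_R(0)} w} \mathcal{H}^{n-1}(E\cap M_r)\,dr,
\]
so $\mathcal{H}^{n-1}(E\cap M_r)=0$ for almost every $r$, which is the claim.

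For (3), fix such an $r$ and a twice-differentiable point $x\in M_r$; the goal is to decompose $-D^2 w(x)$ in the orthonormal basis $\{\nu,\tau_1,\dots,\tau_{n-1}\}$. The $\nu$-$\nu$ entry is $-\Delta_\infty^N w(x)$ and the $\nu$-$\tau_i$ entries are $-\partial^2_{\nu\tau_i}w(x)$ by definition. The heart of the proof is the tangent-tangent block. Since $\nabla w$ is Lipschitz and non-vanishing on a neighborhood of $M_r$, the field $\nu(y)=-\nabla w(y)/|\nabla w(y)|$ is Lipschitz near $x$, and the pointwise twice differentiability of $w$ at $x$ allows one to compute
\[
D_\tau\nu(x) = -\frac{1}{|\nabla w(x)|}\Bigl(D^2 w(x)\,\tau - \langle D^2 w(x)\,\tau,\nu\rangle\,\nu\Bigr)
\]
for every $\tau\in\mathbb{R}^n$, the right-hand side being automatically tangent to $M_r$ as forced by $|\nu|^2\equiv 1$. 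Restricting $\tau$ to $T_x M_r$ and pairing with $\xi\in T_x M_r$ gives the symmetric identity
\[
\langle\nabla^M_\tau\nu,\xi\rangle = -\frac{1}{|\nabla w|}\langle D^2 w\,\tau,\xi\rangle,
\]
whose eigenvectors and eigenvalues are exactly the $\tau_i$ and $\kappa_i$ of the statement; hence $\langle D^2 w\,\tau_i,\tau_j\rangle = -|\nabla w|\,\kappa_i\,\delta_{ij}$. Assembling all blocks in the chosen orthonormal basis produces \eqref{tanghessian}.

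The main obstacle I anticipate is the rigorous justification of the differential identity for $D_\tau\nu$ at a point of only pointwise (Alexandrov-type) twice differentiability rather than classical $C^2$ regularity. The point is that the definition of twice differentiability provides a genuine quadratic Taylor expansion of $w$ at $x$, so the quotient and chain rules can be applied at that single point to $-\nabla w/|\nabla w|$ even though $\nu$ is only Lipschitz (and not classically $C^1$) in a neighborhood. A minor subtlety is that the diagonalizing basis $\{\tau_i\}$ is not unique when some $\kappa_i$ coincide, but this ambiguity does not affect the formula.
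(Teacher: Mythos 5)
Your proof is correct, and parts (1) and (2) coincide with the paper's argument (concavity plus the implicit function theorem, then Alexandrov/Rademacher combined with the coarea formula). Part (3) reaches the same decomposition by a genuinely different computation. The paper passes to a local graph representation $x_n=f(x')$ of the level set, verifies via the implicit function theorem that $f$ is twice differentiable at the base point, computes the shape operator from the explicit formula for the Gauss map of a graph (giving $\nabla^M\nu(0)=-D^2_{x'}f(0)$), and then relates $D^2_{x'}f$ to the tangential Hessian of $w$ by differentiating $w(x',f(x'))=r$ twice. You instead differentiate the normalized gradient field $\nu=-\nabla w/|\nabla w|$ directly in the ambient space, observe that the result is automatically tangential, and pair with tangent vectors to identify the tangent--tangent block of $D^2w$ with $-|\nabla w|\,\nabla^M\nu$. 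Your route is more coordinate-free and skips the graph parametrization in the final computation, at the cost of having to justify the quotient-rule differentiation of $\nu$ at a mere point of twice differentiability; as you correctly note, this is legitimate because pointwise twice differentiability of $w$ gives pointwise differentiability of $\nabla w$ at $x$, and $\nabla w(x)\neq0$, so the pointwise chain and quotient rules apply. The paper's route, by contrast, makes the twice differentiability of the level set itself (equivalently, of $f$) more explicit, which is conceptually reassuring before one speaks of principal curvatures. Both arguments are sound and yield the identical sign conventions.
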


\begin{proof}
\noindent(1)\quad{}As $w$ is concave we have that $\nabla w(x)=0$ if and only if $x$ is such that $w(x)=\sup_{ B_{R}(0) }w$. Then, by the Implicit Function theorem, for every $r\in(0,\sup_{ B_{R}(0) } w)$ the level set $M_r$ can be locally represented as a graph of a $\mathcal{C}^{1,1}$ function. Hence, up to a change of coordinates, we can suppose that there exists $f:\mathcal{U}\subset \mathbb{R}^{n-1}\to\mathbb{R}$ such that 
\[
w(x',f(x'))=r,\qquad\text{and}\quad \partial_{n} w(x',f(x'))\geq c>0\quad\forall x'\in \mathcal{U},
\]
where  we  denote $x=(x',x_n)\in\mathbb{R}^{n-1}\times \mathbb{R}$.  In particular, 
\[
\nabla' f(x')=-\frac{\nabla'w(x',f(x'))}{\partial_{n} w(x',f(x'))}
\]
is Lipschitz.

\noindent(2)\quad{} Now, let $D$ be the set of points of twice differentiability for $w$. Then, we know that $| B_{R}(0) \setminus D|=0$ (see \cite{Evans-Gariepy}) and by the Coarea Formula,
\[
0=\int_{ B_{R}(0) \setminus D}|\nabla w(x)|\,dx=\int_0^{\sup_{ B_{R}(0) } w} \mathcal{H}^{n-1}\big(M_r\cap( B_{R}(0) \setminus D)\big)\,dr.
\]

\noindent(3)\quad{} Let now $x_0$ be a point of twice differentiability for $w$. By part (1), we know that $\nabla w(x_0)\neq0$. Without loss of generality we can assume $x_0=0$.  By means of an axis rotation, we can also suppose that 
 $\nu(0)=e_{n}$, that is,
\[
\nabla w(0)=\partial_{n}w(0,0)\,e_n
\]
with $\partial_{n}w(0,0)<0$.

There exists $f:B_\delta'(0)\subset \mathbb{R}^{n-1}\to\mathbb{R}$ such that $w(x',f(x'))=r$, in particular, $f(0)=0$ and $\nabla'f(0)=0$. Moreover,
\[
\nabla'f(x')=-\frac{\nabla'w(x',f(x'))}{\partial_{n}w(x',f(x'))}
\]
is differentiable at 0.

Using another rotation of axis affecting only the first 
$n-1$ variables, we can suppose that the matrix  $D_{x'}^2  f(0)$ is diagonal 
in our coordinates, namely,
\begin{equation}\label{guido.says.a.word}
D_{x'}^2  f(0)={\rm diag}(-\kappa_1,\ldots,-\kappa_{n-1}).
\end{equation}

Moreover, the Gauss map coincides with
\[
x'\mapsto \frac{1}{\sqrt{1+|\nabla'f(x')|^2}}\,\big(-\nabla'f(x'),1\big),
\]
so differentiating, we obtain
\[
\nabla^{M}\nu(0)=-D_{x'}^2f(0).
\]
In particular, from this equality and \eqref{guido.says.a.word}, we deduce  that  $\tau_1(0),\ldots,\tau_{n-1}(0)$ and $e_1,\ldots,e_{n-1}$ coincide.

Differentiating twice the expression $w(x',f(x'))=r$ with respect to $x'$ we finally obtain,
\[
\begin{split}
D_{x'}^2w(0,0)&=-\partial_{n}w(0,0)\, D_{x'}^2f(0)\\
&=|\nabla w(0)|\, D_{x'}^2f(0)=-|\nabla w(0)|\, \nabla^{M}\nu(0).
\end{split}
\]
Moreover, we also have $\partial_{\nu,\nu}^2w(0)=\langle D^2w(0)\nu(0),\nu(0)\rangle=\Delta_\infty^{N}w(0)$ and the thesis is proved.
\end{proof}

We recall the following version of the Gauss-Bonnet Theorem, the proof of which easily follows from the Area Formula between rectifiable sets (see \cite[Corollary 3.2.20]{Federer}).

\begin{theorem}\label{GBthheorem}Let $K\subset\mathbb{R}^{n}$ a $\mathcal{C}^{1,1}$ convex set and $\nu_K$ its outer normal. Then,
\begin{equation}\label{GBeq}
\int_{\partial K} \prod_{i=1}^{n-1} \kappa_i(x)\, d\mathcal H^{n-1}=\int_{\partial K}\det( \nabla^{\partial K} \nu_K)\,d\mathcal H^{n-1}=\mathcal H^{n-1}\big(\partial B_1(0)\big). 
\end{equation}
\end{theorem}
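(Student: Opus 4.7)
The first equality is a definitional tautology from linear algebra: by construction the principal curvatures $\kappa_1(x),\dots,\kappa_{n-1}(x)$ are the eigenvalues of the Weingarten operator $\nabla^{\partial K}\nu_K:T_x\partial K\to T_x\partial K$, so $\prod_i\kappa_i(x)=\det(\nabla^{\partial K}\nu_K)(x)$ at every point where the tangential differential is defined, which by the $\mathcal{C}^{1,1}$ assumption is $\mathcal{H}^{n-1}$-a.e.\ on $\partial K$.

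For the second equality, the plan is to apply Federer's Area Formula \cite[Corollary 3.2.20]{Federer} to the Gauss map $\nu_K:\partial K\to S^{n-1}$. Since $\partial K$ is a $\mathcal{C}^{1,1}$ rectifiable manifold, $\nu_K$ is Lipschitz, and its tangential Jacobian equals $|\det(\nabla^{\partial K}\nu_K)|$. Convexity of $K$ forces the Weingarten operator to be positive semidefinite $\mathcal{H}^{n-1}$-a.e., so this absolute value may be dropped; the Area Formula then yields
\[
\int_{\partial K}\det(\nabla^{\partial K}\nu_K)\,d\mathcal{H}^{n-1}=\int_{S^{n-1}}\mathcal{H}^{0}\bigl(\nu_K^{-1}(y)\bigr)\,d\mathcal{H}^{n-1}(y).
\]
It remains to show that the integrand on the right equals $1$ for $\mathcal{H}^{n-1}$-a.e.\ $y\in S^{n-1}$. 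Surjectivity of $\nu_K$ is standard: for each $y\in S^{n-1}$ the support function $h_K(y)=\max_{x\in K}\langle x,y\rangle$ is achieved at some boundary point, which necessarily has outer normal $y$. For the multiplicity, observe that if $x_1\ne x_2$ both satisfy $\nu_K(x_j)=y$, then by convexity the whole segment $[x_1,x_2]$ lies in $\partial K$ and is orthogonal to $y$. Hence the set $B:=\{y\in S^{n-1}:\#\nu_K^{-1}(y)\ge 2\}$ is contained in $\nu_K(E)$, where $E\subset\partial K$ denotes the points lying in the relative interior of some non-trivial flat segment of $\partial K$. Along such a segment $\nu_K$ is constant, so $\nabla^{\partial K}\nu_K$ has a zero eigenvalue and $\det(\nabla^{\partial K}\nu_K)=0$ on $E$; applying the Area Formula once more to $\nu_K|_E$ shows that $\mathcal{H}^{n-1}(\nu_K(E))=0$, hence $\mathcal{H}^{n-1}(B)=0$.

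Combining these steps gives
\[
\int_{\partial K}\prod_{i=1}^{n-1}\kappa_i\,d\mathcal{H}^{n-1}=\int_{S^{n-1}}1\,d\mathcal{H}^{n-1}(y)=\mathcal{H}^{n-1}(\partial B_1(0)),
\]
which is the claim. The main potential obstacle is the multiplicity/measure-zero argument for $B$; if a direct handling of flat faces becomes delicate in the non-strictly-convex case, a clean alternative is to approximate $K$ in Hausdorff distance by strictly convex $\mathcal{C}^{2}$ bodies $K_\varepsilon$ (for instance mollifications of $K+\varepsilon B_1(0)$), for which $\nu_{K_\varepsilon}$ is a $\mathcal{C}^{1}$-diffeomorphism and the formula reduces to a literal change of variables; uniform Lipschitz bounds on $\nu_{K_\varepsilon}$ together with pointwise convergence let one pass to the limit by dominated convergence inside the Area Formula.
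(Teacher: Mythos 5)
Your proof is correct, and for the first equality and the initial application of the Area Formula you follow the same path as the paper. Where you diverge is in evaluating the resulting integral over $S^{n-1}$: the paper uses the oriented version of the Area Formula, producing the Brouwer degree of $\nu_K:\partial K\to S^{n-1}$, and then simply asserts that this degree is $1$ (a short topological fact: translate so the origin is interior to $K$ and note $\nu_K$ is homotopic to the radial projection $x\mapsto x/|x|$ through convex combinations, which never vanish since $\langle \nu_K(x),x\rangle>0$). You instead use the unoriented (multiplicity-counting) version, drop the absolute value on the Jacobian using convexity ($\det(\nabla^{\partial K}\nu_K)\geq0$ a.e.), and then show directly that $\mathcal{H}^0(\nu_K^{-1}(y))=1$ for a.e.\ $y$: surjectivity via the support function, and the measure-zero bound on $\{y:\#\nu_K^{-1}(y)\geq2\}$ via the flat-segment observation (if two boundary points share a normal, the segment joining them lies in the supporting hyperplane, hence in $\partial K$) together with a second application of the Area Formula on the set $E$ of flat points where the Jacobian vanishes. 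Your route avoids degree theory entirely, at the cost of the extra flat-face argument; the paper's is shorter but leaves the degree computation implicit. Both are valid, and your smooth-approximation fallback, while sound, is not needed once the multiplicity argument is in place.
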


\begin{proof} By the  Area Formula (see \cite[Corollary 3.2.20]{Federer}) we have
\[
\begin{split}
\int_{\partial K}\det( \nabla^{\partial K} \nu_K)\,d\mathcal H^{n-1}&=\int_{\partial B_1(0)} {\rm deg}(\nu_K,\partial K, \partial B_1(0))\,d \mathcal H^{n-1}\\
&=\mathcal H^{n-1}\big(\partial B_1(0)\big), 
\end{split}
\]
where  ${\rm deg}(\nu_K,\partial K, \partial B_1(0))$ is the Brouwer degree of the Gauss map $\nu_{K}:\partial K\to\partial B_1(0)$ which can be easily seen to be 1.
\end{proof}

\

{{\bf\noindent Acknowledgements:} This paper was started while A. Di Castro and G. De Philippis were visiting the Mathematics Department of the University of Texas at Austin for whose hospitality the authors are very grateful.

}

\


\bibliographystyle{plain}

\begin{thebibliography}{99}

\bibitem{ACP} R. Argiolas, F. Charro, I. Peral; \emph{On the Aleksandrov-Bakel�man-Pucci Estimate for Some Elliptic and Parabolic Nonlinear Operators}, Archive for Rational Mechanics and Analysis, to appear (Doi: 10.1007/s00205-011-0434-y).


\bibitem{armstrong.smart} S. Armstrong, C. Smart; \emph{A finite difference approach to the infinity Laplace equation and tug-of-war games} (preprint).


\bibitem{BB} E. Beckenbach, R. Bellman; \emph{Inequalities}, Ergebnisse der Mathematik und ihrer Grenzgebiete, N. F., Bd. 30 Springer-Verlag, Berlin-G�ttingen-Heidelberg 1961.

\bibitem{Bha-DiBe-Man} T. Bhattacharya, E. DiBenedetto, J. Manfredi; \emph{Limit as $p\rightarrow\infty$ of $\Delta_{p}u_p=f$ and related extremal problems}, Rend. Sem. Mat. Univ. Politec. Torino (1989),   15-68.

\bibitem{BD3} I. Birindelli, F. Demengel;  \emph{The Dirichlet problem for singular fully nonlinear operators}, Discrete Contin. Dyn. Syst. (2007) Dynamical Systems and Differential Equations. Proceedings of the 6th AIMS International Conference, suppl.,  110--121.

\bibitem{CC} X. Cabr\'e, L.A. Caffarelli; \emph{Fully Nonlinear Elliptic Equations,} Amer. Math. Soc., Colloquium publications, Vol. 43 (1995).

\bibitem{CaCrKoSw}L.A. Caffarelli, M.G. Crandall, M. Kocan, A. \'Swie\c ch; \emph{On viscosity solutions of fully nonlinear equations with measurable ingredients}, Comm. Pure Appl. Math., vol. 49 (1996),   365-397.

\bibitem{Crandall.Evans.Gariepy} M. G. Crandall, L. C. Evans, R. F. Gariepy; \emph{Optimal Lipschitz extensions and the infinity Laplacian},
Calc. Var. Partial Differential Equations 13 (2001), no. 2, 123--139.

\bibitem{CIL} M. G. Crandall, H. Ishii, P. L. Lions;  \emph{User's Guide to Viscosity Solutions of Second Order Partial Differential Equations}, Bull.  Amer. Math. Soc. 27 (1992), no. 1,    1--67.

\bibitem{DFQ}G. D\'avila, P. Felmer and A. Quaas; \emph{Alexandroff-Bakelman-Pucci estimate for singular or degenerate fully nonlinear elliptic equations}, C.R.Math. Acad. Sci. Paris 347 (2009), no.19-20, 1165-1168.



\bibitem{Evans-Gariepy}L.C. Evans, R. F. Gariepy; \emph{Measure theory and fine properties of functions}, Studies in Advanced Mathematics. CRC Press, Boca Raton, FL, 1992. viii+268 pp.

\bibitem{Federer} H. Federer; \emph{Geometric measure theory}, Die Grundlehren der mathematischen Wissenschaften, Band 153 Springer-Verlag New York Inc., New York 1969 xiv+676 pp.

\bibitem{Giga} Y. Giga;  \emph{Surface evolution equations. A level set approach}, Monographs in Mathematics, 99, Birkh\"auser Verlag, Basel, 2006.

\bibitem{GT} D. Gilbarg, N. S. Trudinger; \emph{Elliptic Partial Differential Equations of Second Order}, Springer-Verlag, New York (1983).

\bibitem{Gutierrez}  C. Gutierrez; \emph{The Monge-Amp\`ere equation},
Birkh\"auser, 2001


\bibitem{Imbert}  C. Imbert; \emph{Alexandroff-Bakelman-Pucci estimate and Harnack inequality for degenerate/singular fully non-linear elliptic equations}, J. Differential Equations 250 (2011), no. 3, 1553--1574.

\bibitem{Jensen93} R. Jensen; \emph{Uniqueness of Lipschitz extensions: Minimizing the sup norm of the gradient}, Arch. Rational Mech. Anal. 123 (1993),   51--74.



\bibitem{Kawohl.Manfredi.Parviainen} B. Kawohl, Juan J. Manfredi, M. Parviainen; \emph{Solutions of nonlinear PDEs in the sense of averages}, preprint.

\bibitem{LuWangCPDE} G. Lu, P. Wang; \emph{A PDE perspective of the normalized infinity Laplacian}, Comm. Partial Differential Equations 33 (2008), no. 10-12, 1788--1817. 

\bibitem{LuWangAdvances} G. Lu, P. Wang; \emph{Inhomogeneous infinity Laplace equation} Advances in Mathematics 217(4):1838--1868 (2008). 

\bibitem{LuWangEJDEQ} G. Lu, P. Wang; \emph{Infinity Laplace equation with non-trivial right-hand side}, Electron. J. Differential Equations (2010) No. 77.


\bibitem{PSSW} Y. Peres, O. Schramm, S. Sheffield, D. Wilson; \emph{Tug-of-war and the infinity Laplacian}, J. Amer. Math. Soc. 22 (2009), no. 1, 167--210.

\bibitem{PSSW2} Y. Peres, S. Sheffield; {\it Tug-of-war with noise:
a game theoretic view of the $p$-Laplacian}, Duke Math. J. Volume 145, Number 1 (2008), 91-120.





\end{thebibliography}


\end{document}